\theoremstyle{plain}
\newtheorem{theorem}{Theorem}[section]
\newtheorem{cor}[theorem]{Corollary}
\newtheorem{prop}[theorem]{Proposition}
\newtheorem{lemma}[theorem]{Lemma}
\newcounter{proofcount}
\newtheorem{claim}{Claim}
\newtheorem*{claim*}{Claim}
\newenvironment{claimproof}[1][Proof of Claim \theclaim.]
 {%
	\proof[#1]%

}
{%
	\endproof%
}
\newenvironment{claimproofstar}[1][Proof of the Claim.]
{%
	\proof[#1]%
	
}
{%
	\endproof%
}
\theoremstyle{definition}
\newtheorem{remark}[theorem]{Remark}
\newtheorem{fact}[theorem]{Fact}
\newtheorem{definition}[theorem]{Definition}
\newtheorem{example}[theorem]{Example}
\newtheorem*{notation}{Notation}
\newcommand{\nc}{\newcommand}
\nc{\Z}{\mathbb{Z}}
\nc{\Q}{\mathbb{Q}}
\nc{\N}{\mathbb{N}}
\nc{\F}{\mathbb{F}}
\nc{\UU}{\mathbb{U}}
\nc{\C}{\mathbb{C}}
\nc{\CC}{\mathcal C}
\nc{\M}{\mathcal{M}}
\nc\LL{\mathcal L}
\nc{\sbgp}[1]{\langle\xspace {#1}\xspace\rangle}
\nc{\dcl}{\operatorname{dcl}}
\nc{\acl}{\operatorname{acl}}
\nc{\sep}{\operatorname{sep}}
\nc{\alg}{\operatorname{alg}}
\nc{\ins}{\operatorname{ins}}
\nc{\ACF}{\operatorname{ACF}}
\nc{\SCF}{\operatorname{SCF}}
\nc{\DCF}{\operatorname{DCF}}
\nc{\ld}{\operatorname{ld}}
\nc{\nf}[1]{_{\mid {#1}}}
\nc{\restr}[1]{\xspace_{\upharpoonright {#1}}}
\nc{\diff}[2]{#1\langle #2 \rangle}
\nc\inv{ ^{-1}}
\nc\Coh{\operatorname{Coh}}
\nc{\supp}{\operatorname{\mathrm{supp}}}
\nc{\tp}{\operatorname{tp}}
\nc{\trdeg}{\operatorname{trdeg}}
\nc\cb{\operatorname{Cb}}
\nc\U{\operatorname{U}}
\nc{\cf}{\text{cf.\,}}
\nc{\eg}{\text{e.g. }}
\def\moverlay{\mathpalette\mov@rlay}
\def\mov@rlay#1#2{\leavevmode\vtop{%
   \baselineskip\z@skip \lineskiplimit-\maxdimen
   \ialign{\hfil$\m@th#1##$\hfil\cr#2\crcr}}}
\newcommand{\charfusion}[3][\mathord]{
    #1{\ifx#1\mathop\vphantom{#2}\fi
        \mathpalette\mov@rlay{#2\cr#3}
      }
    \ifx#1\mathop\expandafter\displaylimits\fi}
\newcommand{\cupdot}{\charfusion[\mathbin]{\cup}{}}
\def\Ind#1#2{#1\setbox0=\hbox{$#1x$}\kern\wd0\hbox to
  0pt{\hss$#1\mid$\hss} \lower.9\ht0\hbox to
  0pt{\hss$#1\smile$\hss}\kern\wd0}
\def\Notind#1#2{#1\setbox0=\hbox{$#1x$}\kern\wd0\hbox to
  0pt{\mathchardef\nn="0236\hss$#1\nn$\kern1.4\wd0\hss}\hbox to
  0pt{\hss$#1\mid$\hss}\lower.9\ht0 \hbox to
  0pt{\hss$#1\smile$\hss}\kern\wd0}
\def\ind{\mathop{\mathpalette\Ind{}}}
\def\nind{\mathop{\mathpalette\Notind{}}}
\nc{\indacf}{\ind^{\mathrm{ACF}}}
\nc{\indscf}{\ind^{\mathrm{SCF}}}
\nc{\inddcf}{\ind^{\mathrm{DCF}}}
\nc{\indp}{\ind^{p}}
\nc{\indld}{\ind^{\ld}}
\def\nindld{\mathop{\ \ \hbox to 0pt{\hss$\!\not{\mid}^{\hbox to
0pt{$\scriptstyle\, \mathrm{ld}$\hss}}$\hss}
\lower4pt\hbox to 0pt{\hss$\smile$\hss}\ \ }}
\def\nindacf{\mathop{\ \ \hbox to 0pt{\hss$\!\not{\mid}^{\hbox to
0pt{$\scriptstyle\, \mathrm{ACF}$\hss}}$\hss}
\lower4pt\hbox to 0pt{\hss$\smile$\hss}\ \hskip3mm \ }}
\def\nindscf{\mathop{\ \ \hbox to 0pt{\hss$\!\not{\mid}^{\hbox to
0pt{$\scriptstyle\, \mathrm{sep}$\hss}}$\hss}
\lower4pt\hbox to 0pt{\hss$\smile$\hss}\ \hskip3mm \ }}
\def\nindp{\mathop{\ \ \hbox to 0pt{\hss$\!\not{\mid}^{\hbox to
0pt{$\scriptstyle\, p$\hss}}$\hss}
\lower4pt\hbox to 0pt{\hss$\smile$\hss}\ \hspace{3mm} }}
\begin{document}

\title[Forking independence in DCF$_{p.m}$]{Forking independence in differentially closed fields of positive characteristic}
\date{\today}

\author{Piotr Kowalski, Omar Le\'on S\'anchez and Amador Martin-Pizarro}

\address{Instytut Matematyczny, Uniwersytet Wroc\l awski, pl. Grunwaldzki 2--4, 50384 Wroc\l aw, Poland}
\email{pkowa@math.uni.wroc.pl}

\address{Department of Mathematics, University of Manchester, Oxford Road, M13 9PL, Manchester, United Kingdom}
\email{omar.sanchez@manchester.ac.uk}

\address{Abteilung f\"ur Mathematische Logik, Mathematisches Institut,
  Albert-Ludwigs-Universit\"at Freiburg, Ernst-Zermelo-Stra\ss e 1,
  D-79104
  Freiburg, Germany}
\email{pizarro@math.uni-freiburg.de}

\thanks{{\em Acknowledgements}: The first author is supported by the Narodowe Centrum Nauki grant no. 2021/43/B/ST1/00405 and by the T\"{u}bitak grant no. 1001-124F359.
\\
The second author was partially supported by EPSRC grant EP/V03619X/1}
\keywords{separable extensions, differential fields, model theory}
\subjclass[2010]{03C45, 03C60, 12F10, 12H05}

\begin{abstract}
We provide a differential-algebraic description of forking independence in the stable theory DCF$_{p,m}$ of differentially closed fields of characteristic $p>0$ with $m$-many commuting derivations. As a by-product of this description, we prove that types over algebraically closed subsets of the real sort are stationary.
In addition, we prove that the set of non-zero solutions to the Bernoulli differential equation $x'=x^{p^k+1}$ with $k>0$ is strongly minimal and its geometry is strictly disintegrated, which implies that this set is algebraically independent over $\F_p$.
\end{abstract}

\maketitle

\tableofcontents

\section*{Introduction}

It is fair to say that, compared to DCF$_0$, the theory DCF$_p$ of differentially closed fields of characteristic $p>0$ remains vastly unexplored. For instance, while in the 1970's Shelah observed that DCF$_p$ is a stable theory by a counting-type argument \cite{sS73}, no algebraic description of forking independence had been provided (until now). This sits in contrast to the case of  DCF$_0$ (and other similar theories of fields with operators \cite{MS14}) where forking independence was described solely in terms of the associated reduct to the pure field language; in other words, in terms of algebraic independence.

Recall that the underlying field of a model of DCF$_p$ is a separably closed field of infinite degree of imperfection; namely, a model of SCF$_{p,\infty}$. So, one could be tempted to think that forking independence in DCF$_p$ should be described by forking independence of SCF$_{p,\infty}$, at least for (model-theoretically) algebraically closed sets.
However, our results show that forking independence
in DCF$_p$ cannot be described in terms of algebraic independence alone or even in terms of $p$-independence, which is part of forking independence in the reduct SCF$_{p,\infty}$, as shown by Srour \cite{gS86}. We provide explicit examples (see Remark \ref{R:remark_indstar}) illustrating that forking independence in DCF$_p$ does not imply $p$-independence nor is equivalent to algebraic independence.

The motivating question that drives the results of this note is: besides algebraic independence, what other (differential-)algebraic conditions follow from forking independence  $K\inddcf_k L$ on the definably closed subfields $k$, $K$ and $L$? As we mentioned above, forking independence is not as strong as the $p$-independence of $K$ and $L$ over $k$, and it is worth noting that in the differential context $p$-independence of $K$ and $L$ over $k$ translates to the field compositum $KL$ being differentially perfect, that is, its field of constants $\CC_{KL}$ equals $(KL)^p$.

Whilst $K\inddcf_k L$ need not imply that the compositum $KL$ is differentially perfect, it does imply an \emph{almost} differentially perfect condition. Namely,  there exists a $p$-basis of $\CC_{KL}^{1/p}$ over $KL$ which is differentially separably independent over $KL$ (see Section~\ref{S:DCFp} for the corresponding definitions). We give the latter property a name and say that $KL$ is \emph{differentially transcendentally almost perfect}, or \emph{differentially trap} for short. In Lemma~\ref{L:differentially traplem} we show that after adding all derivatives of the $p$-basis, the corresponding differential field is differentially perfect (hence justifying the terminology differentially trap).

\medskip

The main result of the paper is that differentially trap is the missing condition to describe forking independence $\inddcf$ in the theory DCF$_{p,m}$ of differentially closed fields of characteristic $p>0$ equipped with $m$ commuting derivations. In Theorem~\ref{T:main} we prove:

\medskip

\noindent {\bf Theorem A. }{\it Consider a saturated ambient model $\UU$ of the theory $\DCF_{p,m}$.
Given definably closed differential fields $k$, $K$ and $L$ with $k\subseteq K\cap L$, we have that  $K\inddcf_k L$ if and only if \begin{itemize} \item the fields $K$ and $L$ are algebraically independent over $k$ and
\item the differential field $KL$ is differentially trap.
\end{itemize} Furthermore, types over algebraically closed subsets of the real sort are stationary.}
\medskip

Our description of forking independence allows us to investigate the existence of minimal types and strongly minimal subsets in DCF$_p$ in Section~\ref{S:Ber}. In contrast to the case of SCF$_{p,\infty}$, where no strongly minimal definable sets exist, we show that such sets do exist in DCF$_p$. Our examples come from the classical Bernoulli equation $x'=x^n$ and they also enjoy a strong ``Ax-Schanuel like'' property.

\medskip

\noindent {\bf Theorem B.} {\it For any positive integer $k$, the definable set \[\left\{a\in \UU\setminus \{0\} \mid \partial(a)=a^{p^k+1}\right\}\] is strongly minimal and its induced geometry is strictly disintegrated. In particular,  this set is algebraically independent over $\F_p$.}

\medskip

Finally, in the Appendix, we provide a stability criterion suitable for certain theories of fields with operators. Mimicking \cite[Theorem 9]{sS73} and \cite[Theorem 2.4]{pK05}, we use a counting-type argument to deduce stability, and we expect that it may be deployed to prove stability of various theories of fields in positive characteristic equipped with free operators (see \cite{BHKK}).

\medskip

The first author would like to take the opportunity to use the results in this paper as a corrigendum to some previously stated results:
\begin{itemize}
  \item It was wrongly claimed in \cite{pK05} that $\ind^{\DCF}$ can be described as in Fact~\ref{F:Shelah} below over \emph{arbitrary} base sets. Unfortunately, this mistake spread to other papers (as \cite{BHKK} or \cite{goko1}) treating more general cases of operators in the positive characteristic case. Whilst we believe that a description of forking analogous to the one given in Theorem~\ref{T:main} may adapt to these more general cases, we have not yet checked it thoroughly and leave it for future work.

\item The stationarity over algebraically closed subsets of the real sort was already used without a proper argument in \cite{HK4}.
\end{itemize}

\bigskip

\noindent {\bf Acknowledgements.}
The authors would like to thank Jakub Gogolok, Moshe Kamensky and Martin Ziegler for their comments.
We would also like to thank the members of the
model theory group in Wroc{\l}aw for their constructive remarks during the talks of the first author at the model theory
seminars at Wroc{\l}aw University.

The authors would like to thank the support of EPSRC UK, via the second author's grant EP/V03619X/1, that funded a collaborative research visit at the University of Manchester in November 2024 where part of this research was conducted.

\section{Fields and separable extensions}\label{S:Fields}

Though most of the notions and results in this section are classical, we believe that there are some subtleties (especially regarding the role of which subfields of $p^\text{th}$-powers appear) which can be easily overseen. Therefore, we have decided to include some of the definitions.

All throughout this section, we will work inside an ambient field $F$ of characteristic $p>0$.  All tuples and subsets are taken within $F$. Given a subfield $L$ of $F$, we denote by  $L^p$  the subfield of $p^\text{th}$-powers of $L$. Henceforth, we fix two subfields $K$ and $L$ of $F$ with a common subfield $k\subseteq K\cap L$.

\begin{notation}
To render the notation simpler, we will denote the compositum of two fields $K_1$ and $K_2$ by the concatenation $K_1K_2$, in contrast to the use of concatenation in model theory for the set-theoretic union. In order to avoid any possible misunderstanding, we will always use the symbol $\cup$ when writing unions.
\end{notation}

\begin{definition}\label{D:lindisj} \
\begin{enumerate}
\item [(a)]The field $K$ is \emph{linearly disjoint from} $L$ over $k$, denoted by $K\indld_k L$, if whenever a subset $S$ of $K$ is $k$-linearly independent, then it remains so over $L$ (seen as a subset of $F$).

\item [(b)]A set $S$ is \emph{algebraically independent} over $k$ if no element $s$ in $S$ belongs to the algebraic closure $k(S\setminus \{s\})^{\alg}$. We say that $K$ is \emph{algebraically independent} (or \emph{free}, following Lang's terminology) \emph{from} $L$ over $k$, denoted by $K\indacf_k L$, if every  subset $S$ of $K$ which is algebraically independent over $k$ remains so over $L$, or equivalently, if \[ \trdeg(a_1,\ldots, a_n/k)=\trdeg(a_1,\ldots, a_n/L) \text{ for all $a_1,\ldots, a_n$ in $K$}.\]

\item [(c)]A subset $S$ of $K$ is \emph{$p$-independent} over $k$ in $K$ if no element $s$ in $S$ belongs to the subfield $K^p k(S\setminus\{s\})$. If $k=\F_p$, we simply say that $S$ is \emph{$p$-independent} in $K$. A maximal $p$-independent subset $S$ of $K$ over $k$ is called a \emph{$p$-basis} of $K$ over $k$.

\item [(d)]The subset $S$ of $K$ is \emph{separably independent} over $k$
 if no element $s$ in $S$ belongs to the separable closure $k(S\setminus \{s\})^{\sep}$.

\item [(e)]The field extension $k\subseteq K$ is \emph{separable} if $k\indld_{k^p} K^p$, or equivalently, if every $p$-independent subset $S$ of $k$ remains $p$-independent in $K$.
\end{enumerate}
 \end{definition}

\begin{fact}\label{F:sep_indep}~
\begin{enumerate}[(a)]
    \item Linear disjointness and algebraic independence are both  transitive and symmetric notions of independence. Linear disjointness always implies algebraic independence and they coincide whenever one  of the two field extensions $k\subseteq K$ or $k\subseteq L$ is regular \cite[Chapter VIII \S 4]{sL02}.

    \item Algebraic independence implies separable independence. Moreover, the extension $k\subseteq K$ is separable if and only if every subset of $K$ which is separably independent over $k$ is  algebraically independent over $k$ as well \cite[\S 0.2]{eK73}.
\end{enumerate}
\end{fact}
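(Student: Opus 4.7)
The plan is to assemble Fact~\ref{F:sep_indep} from standard material in Lang \cite{sL02} and Kolchin \cite{eK73}, filling in the easy implications explicitly and invoking the classical treatments for the less trivial ones.

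For part (a), I would first record the intrinsic characterizations: $K\indld_k L$ is equivalent to the multiplication map $K\otimes_k L\to KL$ being injective, and $K\indacf_k L$ is equivalent to $\trdeg(K/L)=\trdeg(K/k)$. Symmetry and transitivity of both relations are immediate from these characterizations (symmetry of the tensor product for the first, and the tower property of transcendence degree for the second). The implication linear disjointness $\Rightarrow$ algebraic independence is then a one-line argument: if $\{s_1,\dots,s_n\}\subseteq K$ is algebraically independent over $k$, its monomials are $k$-linearly independent, remain $L$-linearly independent by linear disjointness, and hence witness algebraic independence over $L$. The converse under the regularity hypothesis on one of the extensions is precisely the content of \cite[Ch.~VIII, \S4]{sL02}, where one uses regularity to promote an algebraically independent (transcendence) basis into a linearly independent family over $L$ without picking up inseparable relations.

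For part (b), the first claim is immediate from the inclusion $k(S\setminus\{s\})^{\sep}\subseteq k(S\setminus\{s\})^{\alg}$, which yields the contrapositive: algebraic independence forces separable independence. The equivalence characterizing separable extensions is then MacLane's criterion as developed in \cite[\S 0.2]{eK73}. The nontrivial direction is that, assuming $k\subseteq K$ is separable, any set $S\subseteq K$ separably independent over $k$ remains algebraically independent over $k$. The idea is: if some $s\in S$ were algebraic but not separably algebraic over $k_0:=k(S\setminus\{s\})$, then $s^{p^n}\in k_0$ for some $n\geq 1$, producing a polynomial relation over $k$ in the $p^n$-th powers of the elements of $S$; unraveling this relation yields a $p$-independent subset of $k$ which fails to remain $p$-independent in $K$, contradicting the characterization of separability in Definition~\ref{D:lindisj}(e).

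The only real subtlety lies in the last step, namely the careful bookkeeping of $p^n$-th powers, $p$-independence, and separable closures needed to turn the purely inseparable relation into a violation of $k\indld_{k^p} K^p$. This is exactly the content of Kolchin's treatment and will be cited rather than reproduced, since the present paper uses Fact~\ref{F:sep_indep} only as a reference point for the subsequent differential-algebraic development.
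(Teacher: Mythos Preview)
The paper does not supply a proof of Fact~\ref{F:sep_indep}; it is recorded as a fact with citations to Lang and Kolchin, so there is no argument in the paper to compare against. Your plan to cite those same sources for the nontrivial parts and spell out the easy implications is entirely in line with how the paper treats the statement.

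That said, your sketch for the nontrivial direction of (b) contains a slip. You write that if $s\in S$ is algebraic but not separably algebraic over $k_0=k(S\setminus\{s\})$, then $s^{p^n}\in k_0$ for some $n\geq 1$. This is false: algebraic-but-not-separable does not force purely inseparable. The minimal polynomial of $s$ over $k_0$ has the form $g(X^{p^n})$ with $g$ separable and $n\geq 1$, so what you get is that $s^{p^n}$ is \emph{separably algebraic} over $k_0$, not that it lies in $k_0$. From there the argument does not proceed by extracting a ``polynomial relation over $k$ in the $p^n$-th powers of the elements of $S$'' as you describe. Kolchin's route (and the one implicit in MacLane's criterion) goes instead through separating transcendence bases in the finitely generated case: separability of $k\subseteq K$ means every finitely generated intermediate extension has a separating transcendence basis, and a separably independent set must sit inside such a basis, whence it is algebraically independent. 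Since you ultimately defer to \cite[\S 0.2]{eK73} anyway, this does not damage your proposal, but you should correct or remove the faulty heuristic so as not to mislead the reader.
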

Unlike algebraic independence or linear disjointness, separable independence need not be transitive: Given  a transcendental element $a$ over $\F_p$, the element $a$ is separably independent over $\F_p$ by Fact \ref{F:sep_indep} (b). Similarly, the element $a^{1/p}$ is separably independent over $\F_p(a)$, for it is purely inseparable. However, the subset $A=\{a,a^{1/p}\}$ is clearly separably dependent over $\F_p$. In particular, separable closure is an operator which does not satisfy the \textit{Steinitz exchange} property.

Despite the lack of transitivity, the following result holds.
\begin{lemma}\label{L:sepalg}
If the subset $A$ of $K$ is separably independent over $k$ and the subset $B$ of $K$ is algebraically independent over $k(A)$, then $A\cup B$ is separably independent over $k$.
\end{lemma}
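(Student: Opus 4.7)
My plan is to argue by contradiction. Suppose $A\cup B$ is separably dependent over $k$, so some $c\in A\cup B$ lies in $k((A\cup B)\setminus\{c\})^{\sep}$. If $c\in B$, then $c$ is algebraic over $k(A)(B\setminus\{c\})$, which immediately contradicts algebraic independence of $B$ over $k(A)$. Hence $c\in A$, and setting $F:=k(A\setminus\{c\})$, the hypothesis becomes that $c$ is separably algebraic over $F(B)$. The goal reduces to showing $c\in F^{\sep}$, contradicting separable independence of $A$ over $k$.

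I would carry this out in two steps. The first is to show $c$ is algebraic over $F$. Fix a finite $\underline{b}\subseteq B$ with $c$ algebraic over $F(\underline{b})$; since $B$ is algebraically independent over $k(A)=F(c)$, so is $\underline{b}$, giving $\trdeg(F(c,\underline{b})/F(c))=|\underline{b}|$. If $c$ were transcendental over $F$, this would yield $\trdeg(F(c,\underline{b})/F)=|\underline{b}|+1$. But algebraicity of $c$ over $F(\underline{b})$ forces $\trdeg(F(c,\underline{b})/F)=\trdeg(F(\underline{b})/F)\leq|\underline{b}|$, a contradiction. Hence $c$ is algebraic over $F$.

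The second step is to transfer separability from $F(B)$ down to $F$. Let $f(X)\in F[X]$ be the minimal polynomial of $c$ over $F$. I claim $f$ remains irreducible over $F(B)$: by Gauss's lemma in the polynomial UFD $F[Y_1,\ldots,Y_n,X]$, any factorization of $f$ in $F(B)[X]$ descends to one in $F[Y][X]$, and since $f$ has $Y_i$-degree zero, comparing individual $Y_i$-degrees (which must sum to zero) forces both factors to lie in $F[X]$, contradicting irreducibility of $f$ over $F$. Thus $f$ is the minimal polynomial of $c$ over $F(B)$ as well, and since $c$ is separably algebraic over $F(B)$ by hypothesis, $f$ is a separable polynomial, giving $c\in F^{\sep}$. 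I expect the main subtlety to be this irreducibility-preservation step; everything else amounts to routine unpacking of the definitions from Section~\ref{S:Fields}.
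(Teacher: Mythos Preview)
Your proof is correct, but it takes a different route from the paper's. The paper argues directly with an arbitrary polynomial relation: given $f(\bar a,\bar b,a)=0$ with $\bar a$ in $A\setminus\{a\}$ and $\bar b$ in $B$, the algebraic independence of $\bar b$ over $k(\bar a,a)$ forces $f(\bar a,\bar Y,a)\equiv 0$, yielding finitely many relations $g_j(\bar a,a)=0$ over $k$; separable independence of $A$ then guarantees each $g_j(\bar a,Z)$ has $a$ as a repeated root, whence $\partial f/\partial Z$ also vanishes at $(\bar a,\bar b,a)$, so $f(\bar a,\bar b,Z)$ is never separable. Your argument is more structural: you first isolate the algebraicity of $c$ over $F=k(A\setminus\{c\})$ by a transcendence-degree count, and then invoke Gauss's lemma to show that the minimal polynomial of $c$ over $F$ survives unchanged over $F(B)$, so separability over $F(B)$ descends to $F$. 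The paper's route is slightly more elementary (no appeal to unique factorization or content), while yours cleanly separates the two obstructions (algebraicity versus separability) and makes explicit why the minimal polynomial is the right object to track. One small point: your Gauss's-lemma step tacitly uses that the finite tuple $\underline{b}$ is algebraically independent over $F$ (not just over $F(c)$), which is immediate since $F\subseteq F(c)$, and that any factorization over $F(B)$ already lives over some finite $F(\underline b)$; both are routine but worth stating.
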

\begin{proof}
Without loss of generality, we need only show that no element $a$ in $A$ can be separable algebraic over $k(A\setminus\{a\}\cup B)$. Assume that $f(\bar a, \bar b, a)=0$ for some tuples $\bar a$ in $A\setminus \{a\}$ and $\bar b$ in $B$ as well as some  non-trivial polynomial over  $k$
\[f(\bar X, \bar Y, Z)=\sum_{i} f_i(\bar X, \bar Y)Z^i.\]
Since the tuple $\bar b$ consists of algebraically independent elements over $k(\bar a, a)$, we obtain $f(\bar a, \bar Y, a)=0$, which translates into a finite system of polynomial equations \[ g_1(\bar a, a)=\cdots=g_m(\bar a, a)=0 \] with coefficients in $k$. Now, the subset $A$ is separably independent over $k$, so  each polynomial $g_i(\bar a, Z)$ in the previous system is either trivial or inseparable in $Z$. In both cases, we have that \[ \frac{\partial f}{\partial Z}(\bar a,\bar Y, Z)=0,\]
and hence the corresponding partial derivative is $0$ when evaluated at $\bar b$, so the polynomial $f(\bar a, \bar b, Z)$ is not separable.  We conclude that $A\cup B$ is separably independent over $k$, as desired.
\end{proof}
\begin{remark}\label{R:sep_ind_p-indep}
Given a field extension $k\subseteq K$ and a subset $A$ of $K$, we have that $A$ is separably independent over $k$ if and only if $A$ is $p$-independent in $k(A)$ over $k$.
\end{remark}
\begin{proof}
Notice first that an element $b$ of $K$ is separable algebraic over $k$ if and only if $b$ belongs to $k(b^p)$ (see for instance \cite[\S1.6]{zC2001}).

Suppose first that $A$ is separably independent over $k$, yet the element $a$ in $A$ belongs to $k(A)^pk(A\setminus\{a\})=k(A\setminus \{a\})(a^p)$. By the previous observation, we deduce that  $a$ belongs to $k(A\setminus \{a\})^{\sep}$, which gives the desired contradiction. For the converse, assume for a contradiction that $A$ is separably dependent over $k$.  Hence, there is some $a$ in $A$ which is separable algebraic over $k(A\setminus \{a\})$. In particular, we have that $a$ belongs to
\[k(A\setminus \{a\})(a^p)= k(A)^pk(A\setminus \{a\})\subseteq K^p k(A\setminus \{a\}),\]
so $A$ cannot be $p$-independent in $K$ over $k$.
\end{proof}

\begin{definition}\label{D:p_indep}
  We say that $K$ is \emph{$p$-independent from $L$} over $k$ in $F$ if every subset of $K$ which $p$-independent over $k$ in $F$ remains $p$-independent over $L$ in $F$.
\end{definition}

Whilst linear disjointness or algebraic independence are absolute notions, notice that $p$-independence depends on the ambient field $F$. Furthermore, we have that $p$-independence corresponds to the independence in the pregeometry given by the $p$-closure in $F$, where $a$ belongs to the $p$-closure of $B$ if $a$ belongs to $F^p(B)$.
\begin{remark}\label{R:pindep_obs}
The above definition of $p$-independence extends Srour's ternary $p$-independence relation \cite[p. 722]{gS86}, for he assumes that all field extensions $k\subseteq K\subseteq  F$, and $k\subseteq L\subseteq  F$ are separable.

Chatzidakis \cite[\S 1.2]{zC99} defines a similar ternary notion of $p$-independence, though replacing the role of $F$ with the compositum $K L$. In her work, the extensions $k\subseteq K\subseteq K L$ and $k\subseteq L$ are assumed to be separable, explicitly or implicitly in the case of $K\subseteq K L$. Under these additional conditions, her definition coincides with our Definition \ref{D:p_indep} if $F=KL$.
\end{remark}

From now on, we work inside a sufficiently saturated model $\UU$ of the $\LL_\mathrm{Ring}$-theory SCF$_{p,\infty}$ of separably closed fields of characteristic $p>0$ and infinite imperfection degree (in other words, from now on $F=\UU$). As shown by Ershov \cite{yE68}, this theory is complete. Wood \cite[Theorem 3]{cW79} showed that the theory  SCF$_{p,\infty}$ is stable, so Shelah's non-forking defines a tame notion of independence, which we will denote by $\indscf \ $.

\begin{remark}\label{R:indepSCF_notion}
Srour \cite[Theorem 13]{gS86} provided an explicit description of non-forking independence in SCF$_{p,\infty}$:  whenever the field extensions $k\subseteq K\subseteq \UU$ and $k\subseteq L\subseteq \UU$ are all separable, we have that
   \[ K\indscf_k L \ \iff \ K\indacf_k \ L \ \text{ and } \ K\indp_k L, \]
where $\indp$ denotes the ternary relation of $p$-independence in $\UU$.
\end{remark}

Algebraic indepen\-dence and $p$-independence in $\UU$ need not imply one another, even if all field extensions in question are separable: Consider $K=\F_p(x)$ and $L=\F_p(x+y^p)$, with $x$ and $y$ in $\UU\setminus \UU^p$ algebraically independent over $\F_p$. The field $K$ is algebraically independent from $L$ over $\F_p$, yet $K$ is not $p$-independent from $L$ over $\F_p$ in $\UU$. Similarly, choosing an element $u$ in $\UU^p$ which is transcendental over $\F_p$, the field $K=\F_p(u)$ is $p$-independent from $L=\F_p(u)$ over $\F_p$ in $\UU$, yet $K$ is clearly not algebraically independent from $L$ over $\F_p$.
Indeed, the notion of $p$-independence in $\UU$ is vacuous for subfields of $\UU^p$.

We collect below some observations concerning $p$-independence, which will be needed in the sequel.
\begin{fact}\label{F:pindep_pbasis}
Consider subsets $A$ and $S$ of $K$ as well as a subset $B$ of $\UU$.
\begin{enumerate}[(a)]
\item The set $A$ is $p$-independent over $k$ in $K$ if and only if the collection of \emph{$p$-monomials in $A$} \[ a_{1}^{k_1}\dots a_{n}^{k_n} \text{ with $a_i$ in $A$ and } 0\leqslant k_i<p-1\]  is linearly independent over $K^p k$ \cite[\S 1.2]{zC99}.

\item  The extension $k\subseteq K$ is separable if and only if some (or equivalently, every) $p$-basis of $k$ remains $p$-independent in $K$.

Moreover, if the extension $k\subseteq K$ is separable, then a subset $C$ of $k$ is $p$-independent in $k$ over a subfield $k_0$ of $k$ if and only if $C$ is $p$-independent over $k_0$ in $K$ \cite[\S \ 1.10]{zC99}.

\item Given subfields $k_0\subseteq k$ and $K_0\subseteq K$; if both field extensions $k\subseteq K$ and $k_0\subseteq K_0$  are separable algebraic, then $K^pk_0 = K^{p}k$ and $K_0^p k = K^{p}k$. In particular, every $p$-basis of $k$ over $k_0$ is a $p$-basis of $K$ over $K_0$.

\item If $A$ is a $p$-basis of $K$ over $k$ and $B$ is algebraically independent over $K$, then
\begin{itemize}
  \item  the set $A$ is a $p$-basis of $K(B)$ over $k(B)$;

  \item the set $B$ is a $p$-basis of $K(B)$ over $K$.
\end{itemize}

\item If the extension $k\subseteq K$ is separable and $A$ is $p$-independent in $K$ over $k$, then $A$ is algebraically independent over $k$. Moreover, if $K$ is  finitely generated over $k$ and $A$ is a $p$-basis of $K$ over $k$, then $A$ is a separating transcendence basis of $K$ over $k$ \cite[\S \ 1.13 (1 \& 2)]{zC99} (see also Fact \ref{F:sep_indep} (b) and Remark \ref{R:sep_ind_p-indep} in this context).
\end{enumerate}
\end{fact}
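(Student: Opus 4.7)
Each of the five parts is a standard structural fact about $p$-bases; the real content lives in (a), and the remaining parts follow by careful bookkeeping. My plan is to prove (a) first, and then deduce the rest from it.

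For (a), the strategy is to show that the $p^{|A|}$ many $p$-monomials in a finite $A = \{a_1, \ldots, a_n\} \subseteq K$ always span $K^p k(A)$ over $K^p k$ (immediate from $a_i^p \in K^p$), and then that they are $K^p k$-linearly independent if and only if $A$ is $p$-independent. The latter I would prove by induction on $n$: given a putative nontrivial relation, collect terms by the exponent of $a_n$ to obtain a polynomial in $a_n$ of degree $< p$ with coefficients in $K^p k(a_1, \ldots, a_{n-1})$. If $A$ is $p$-independent, then $a_n \notin K^p k(a_1, \ldots, a_{n-1})$; since $a_n^p \in K^p$, the minimal polynomial of $a_n$ over this subfield must be $X^p - a_n^p$, because any proper divisor of $(X - a_n)^p$ of degree $d < p$ has $-d a_n$ as its coefficient of $X^{d-1}$, forcing $a_n$ into the subfield. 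This makes every coefficient of the relation vanish, and the induction hypothesis applied to $A \setminus \{a_n\}$ finishes. The converse is the contrapositive.

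Parts (b) and (e) follow from (a) immediately. Separability $k \indld_{k^p} K^p$ says precisely that every $k^p$-linearly independent subset of $k$ remains $K^p$-linearly independent; applying (a) over both $k$ and $K$ translates this into preservation of $p$-independence of a (and hence every) $p$-basis, and the moreover clause is the relative version with $k_0$ in place of $k^p$. For (e), Remark~\ref{R:sep_ind_p-indep} converts $p$-independence of $A$ in $K$ over $k$ into separable independence of $A$ over $k$, which under separability $k \subseteq K$ upgrades to algebraic independence by Fact~\ref{F:sep_indep}(b); maximality of a $p$-basis in the finitely generated setting then forces a separating transcendence basis.

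For (c), separable algebraicity of an extension $L \subseteq M$ gives $M = L(M^p)$, because every element of $M$ is separable over $L$ hence lies in $L(m^p)$; applied to $k \subseteq K$ this yields $K^p k = K^p k_0$ after absorbing the $k_0$-polynomial description of elements of $k$ into $K^p$, and applied to $K_0 \subseteq K$ (iterating Frobenius to push powers of $K$ arbitrarily high and then reabsorbing into $K_0^p k$) it yields $K^p k = K_0^p k$. The $p$-basis transfer is then immediate from (a), since the spanning and linear independence of the $p$-monomials take place over the same field on both sides. Part (d) is a direct computation: since $B$ is algebraically independent over $K$, it is algebraically independent over $K^p k$, so the $p$-monomials in $A$ remain linearly independent when the base is extended to $K^p k(B) = K(B)^p k(B)$; the second bullet is the symmetric assertion for $B$, using that an algebraically independent set is $p$-independent via Remark~\ref{R:sep_ind_p-indep}. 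The main obstacle is the linear independence step in (a), which carries the actual content; among the bookkeeping parts, (c) is the most delicate because of the interaction of Frobenius with field composition.
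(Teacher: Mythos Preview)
The paper does not prove this Fact; it is recorded with references to Chatzidakis and treated as background, so there is no ``paper's proof'' to compare against. Your proposal to supply a direct argument is therefore supplementary, and your treatment of (a), (b), (d), and (e) is correct and along standard lines.

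Part (c), however, has a genuine gap. The phrases ``absorbing the $k_0$-polynomial description of elements of $k$ into $K^p$'' and ``iterating Frobenius to push powers of $K$ arbitrarily high and then reabsorbing into $K_0^p k$'' are not arguments, and in fact the two displayed equalities $K^p k_0 = K^p k$ and $K_0^p k = K^p k$ do not follow from the stated hypotheses alone: take $k_0 = K_0 = \F_p$ and $k = K = \F_p(t)^{\sep}$, where both $k\subseteq K$ and $k_0\subseteq K_0$ are trivially separable algebraic, yet $K^p k_0 = K^p \ne K = K^p k$. So you are hand-waving through a claim that, as literally written, is false.

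What the paper actually \emph{uses} is only the ``in particular'' clause about transfer of $p$-bases, and that does follow cleanly once you isolate the right intermediate steps. From $k_0 \subseteq K_0$ separable algebraic one gets $K_0 = K_0^p k_0$, hence $K^p K_0 = K^p k_0$; this identifies the ground field for $p$-independence in $K$ over $K_0$. Spanning then comes from $K = K^p k$ (via $k \subseteq K$ separable algebraic) combined with $k = k^p k_0(A)$ for a $p$-basis $A$ of $k$ over $k_0$, giving $K = K^p k_0(A) \subseteq K^p K_0(A)$. Linear independence of the $p$-monomials over $K^p K_0 = K^p k_0$ follows from the base-change $k \indld_{k^p k_0} K^p k_0$, itself a consequence of the separability $k \indld_{k^p} K^p$. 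Your sketch does not separate these steps and instead gestures at the two problematic equalities; you should replace that paragraph with the argument just outlined.
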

Whilst the following results are well-known,  we have decided nonetheless to include short proofs for the sake of the presentation.

\begin{lemma}\label{L:pindep_pbasis}
Let $A$ be a subset of $K$ and assume that both field extensions $k\subseteq K$ and $k\subseteq  L$ are separable.
\begin{enumerate}[(a)]
    \item  We have the following implication: \[ K\indacf_k L \  \Longrightarrow \left\{ \begin{minipage}{7cm} $K\subseteq K L$ is separable and \\[1mm] $K$ is $p$-independent from $L$ over $k$ in $K L$.\end{minipage} \right.\]
The converse holds when $K$ is finitely generated over $k$.

\item Suppose that $L\subseteq L_1$ is a separable field extension such that $K\indacf_k L_1$. Then, all field extensions in the following diagram are separable:
\begin{center}
\begin{tikzcd}
K  \arrow[hookrightarrow]{r} & KL \arrow[hookrightarrow]{r} & K L_1
\\
k \arrow[hookrightarrow]{u}  \arrow[hookrightarrow]{r} & L \arrow[hookrightarrow]{u} \arrow[hookrightarrow]{r} & L_1 \arrow[hookrightarrow]{u}
\end{tikzcd}
\end{center}
In particular, the field extensions $K\subset KL\subset KL_1$ are both separable.
\item Suppose $K\subseteq \UU$ and $L\subseteq \UU$ are both separable. If $K\indp_k L $, then $K L\subseteq \UU$ is separable. Moreover, the converse holds if $K\indacf_k L$.
\end{enumerate}
\end{lemma}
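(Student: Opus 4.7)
The plan is to establish (a) by careful $p$-basis manipulation, then deduce (b) and (c) from it. Fix a $p$-basis $B$ of $k$ over $\F_p$; by the separability hypotheses and Fact \ref{F:pindep_pbasis}(b), $B$ extends to $p$-bases $B \cup C$ of $K$ over $\F_p$ and $B \cup D$ of $L$ over $\F_p$, where $C$ is a $p$-basis of $K$ over $k$ and $D$ a $p$-basis of $L$ over $k$. By Fact \ref{F:pindep_pbasis}(e), both $C$ and $D$ are algebraically independent over $k$, and the hypothesis $K \indacf_k L$ promotes $C$ to being algebraically independent over $L$ and $D$ over $K$.

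For (a) forward, Fact \ref{F:pindep_pbasis}(d) identifies $C$ as a $p$-basis of $L(C)$ over $L$ and $D$ as a $p$-basis of $L(C)$ over $k(C)$. The identity $K = K^p \cdot k(C)$ (valid since $C$ is a $p$-basis of $K$ over $k$) yields the field identity $KL = (KL)^p \cdot k(C \cup D)$; combining this with the $p$-basis identifications and a careful linear-independence bookkeeping for $p$-monomials, one shows that $B \cup C \cup D$ is a $p$-basis of $KL$ over $\F_p$. In particular, the $p$-basis $B \cup C$ of $K$ over $\F_p$ remains $p$-independent in $KL$, yielding separability of $K \subseteq KL$ via Fact \ref{F:pindep_pbasis}(b); moreover, the uniqueness of the $(KL)^p$-expansion against $B \cup C \cup D$ delivers the $p$-independence of $K$ from $L$ over $k$ in $KL$. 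For the converse under $K$ finitely generated over $k$, take a separating transcendence basis $T$ of $K$ over $k$, which is also a $p$-basis of $K$ over $k$ by Fact \ref{F:pindep_pbasis}(e); separability of $K \subseteq KL$ and the $p$-independence hypothesis preserve the $p$-independence of $T$ over $L$ in $KL$, and a derivative argument in the spirit of Lemma \ref{L:sepalg} together with Remark \ref{R:sep_ind_p-indep} upgrades this to algebraic independence of $T$ over $L$, whence $K \indacf_k L$ by comparison of transcendence degrees.

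For (b), the monotonicity of $\indacf$ combined with (a) gives the separability of each of $K \subseteq KL$, $K \subseteq KL_1$, $L \subseteq KL$, and $L_1 \subseteq KL_1$. To obtain $KL \subseteq KL_1$ separable, write $KL_1 = (KL) \cdot L_1$ and apply (a) to the pair $(KL, L_1)$ over $L$, after deducing $KL \indacf_L L_1$ via transitivity of algebraic independence from $K \indacf_k L$ and $K \indacf_k L_1$. For (c), the forward direction exploits that $K \indp_k L$ in $\UU$ means $C$ remains $p$-independent over $L$ in $\UU$; combined with the $p$-independence of a $p$-basis of $L$ in $\UU$ (by separability of $k \subseteq L \subseteq \UU$), the same assembly as in (a) produces a $p$-basis of $KL$ that is $p$-independent in $\UU$, yielding separability of $KL \subseteq \UU$. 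For the converse under $K \indacf_k L$, part (a) supplies the $p$-basis $B \cup C \cup D$ of $KL$; separability of $KL \subseteq \UU$ keeps it $p$-independent in $\UU$, and extracting $B \cup C$ recovers $K \indp_k L$ in $\UU$.

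The main obstacle is the linear-independence verification that $B \cup C \cup D$ is a genuine $p$-basis of $KL$ (not merely a generating set), as this requires exploiting both $K \indacf_k L$ and the separability of the ground extensions $k \subseteq K, L$ to rule out spurious $(KL)^p$-linear relations among the $p$-monomials; the derivative-based upgrade from $p$-independence to algebraic independence in the converse direction of (a) is similarly delicate, since sep-independence alone is strictly weaker than algebraic independence outside the separable context.
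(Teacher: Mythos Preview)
Your treatment of parts (b), (c), and the converse of (a) is essentially the paper's: the paper also chooses $p$-bases $S_k$, $S_K$, $S_L$ (your $B$, $C$, $D$) for part (c), derives (b) from (a) via monotonicity of $\indacf$ applied along the tower $L\subseteq L_1$, and for the converse of (a) runs a separating transcendence basis through the $p$-independence hypothesis.

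For the \emph{forward} direction of (a), however, the paper takes a genuinely different route, and the difference matters because of the very obstacle you flag. The paper first observes that a putative $p$-dependence of a subset of $K$ over $L$ inside $KL$ involves only finitely many elements of $KL$, so it suffices to treat the case where $K$ is finitely generated over $k$. In that case a $p$-basis $A'$ of $K$ over $k$ is a \emph{separating transcendence basis} (the second half of Fact~\ref{F:pindep_pbasis}(e), which needs finite generation), so $k(A')\subseteq K$ is separable \emph{algebraic}; hence $L(A')\subseteq KL$ is separable algebraic, and Fact~\ref{F:pindep_pbasis}(c) then lifts the $p$-independence of $A'$ from $L(A')$ to $KL$. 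Separability of $K\subseteq KL$ is simply cited from Srour.

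Your global approach stalls exactly where you say it does. Showing that $B\cup C\cup D$ is $p$-independent in $KL$ reduces to showing $C$ is $p$-independent over $L$ in $KL$. You know $C$ is $p$-independent over $L$ in $L(C)$ (via algebraic independence and Fact~\ref{F:pindep_pbasis}(d)), but lifting this to $KL$ needs $L(C)\subseteq KL$ separable --- and your only access to that separability is through the very $p$-basis claim you are proving, so the argument is circular. Without finite generation, $k(C)\subseteq K$ is separable but \emph{not} algebraic, so Fact~\ref{F:pindep_pbasis}(c) does not apply. The reduction to the finitely generated case is precisely what breaks this circularity: it upgrades ``separable'' to ``separable algebraic'' on the relevant subextension. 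Your ``careful linear-independence bookkeeping'' would, if written out, have to rediscover this reduction.
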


\begin{proof}
For the proof of the implication in (a),  the separability part is exactly \cite[Fact 5]{gS86}. The $p$-independence part is shown in \cite[\S 1.14]{zC99} under a stronger assumption of linear disjointness. Our argument comes from the proof of \cite[Fact 5]{gS86}.

It follows directly from the definitions that, if $K_0$ is $p$-independent from $L$ over $k$ in $K_0M$ for any finitely generated over $k$ subextension $k\subseteq K_0\subseteq K$, then $K$ is $p$-independent from $L$ over $k$ in $KL$.
Thus, we may assume that $K$ is finitely generated over $k$.

Choose therefore some subset $A$ of $K$ which is $p$-independent over $k$ in $K L$. In particular, the subset $A$ is $p$-independent over $k$ in $K$ and thus it extends to a $p$-basis $A'$ of $K$ over $k$. Since the extension $k\subseteq K$ is separable, we have that $A'$ is a separable transcendence basis of $K$ over $k$ by Fact \ref{F:pindep_pbasis} (e). We need only show that $A'$ is $p$-independent over $L$ in $K L$. Now, the field $K$ is algebraically independent from $L$  over $k$, so $A'$ is algebraic independent over $L$. Fact \ref{F:pindep_pbasis} (d) yields that $A'$ is $p$-independent in $L(A')$ over $L$. Now, the extension $k(A')\subseteq K$ is separable algebraic and thus so is $L(A')\subseteq K L$. We conclude that $A'$ is $p$-independent in $K L$ over $L$, as desired.

For the converse statement in (a), assume that $K$ is finitely generated over $k$. It suffices to show that a separating transcendence basis $A$ of $K$ over $k$ is algebraically independent over $L$. Now, the extension $K\subseteq K L$ is separable, so $A$ is $p$-independent over $k$ in $K L$. Since $K$ is $p$-independent from $L$ over $k$ in $K L$, we have that $A$ is $p$-independent in $K L$ over $L$. Fact \ref{F:pindep_pbasis} (e) applied to the separable extension $K\subseteq K L$
yields that $A$ is is algebraically independent over $L$, as desired.

For (b), note that $k\subseteq L$ is separable as well. Since $K\indacf_kL_1$, we also have $K\indacf_k L$, so  $L\subseteq K L$ is separable by (a). Now the extension $L\subseteq L_1$ is separable, so the independence $K L\indacf_L L_1$ yields (by (a) again) that the field extension $K L\subseteq K L_1$ is again separable, as desired.
\\
For a proof of (c), choose $p$-bases $S_k$ of $k$ as well as $S_K$ of $K$ over $k$ and $S_L$ of $L$ over $k$. Since $K\subseteq \UU$, resp. $L\subseteq \UU$, is separable, we have that $S_K$, resp. $S_L$, is $p$-independent over $k$ in $\UU$. The $p$-independence of $K$ from $L$ over $k$ yields that $S_K$ is $p$-independent over $L$ in $\UU$. In particular, the subset $S_k\cup S_K\cup S_L$ of $K L$ is  $p$-independent in $\UU$. We need only show that it is a $p$-basis of $K L$, which follows immediately from the equality
\[ (K L)^p\left(S_k\cup S_K\cup S_L\right)=K^p\left(S_k\cup S_K\right)L^p\left(S_k\cup S_L\right)=K L.\]

If we now assume that $K\indacf_k L$ and $K L\subseteq \UU$ is separable, then part (a) yields that $K$ is $p$-independent from $L$ over $k$ in $K L$ and thus in $\UU$, as desired.
\end{proof}

\begin{remark}\label{R:counter}
Note that the converse of Lemma \ref{L:pindep_pbasis} (a) need not hold without the assumption of finite generation of $K$. Indeed, 
if $x$ is transcendental over $\F_p$,  then $K=\F_p(x)^{\alg}$  is $p$-independent from $L=\F_p(x)$  over $\F_p$ in $KL=K$, since $K$ is perfect.
However, the field $K$ is not algebraically independent from $L$ over $\F_p$. 
\end{remark}

\section{Differential fields in positive characteristic}\label{S:DCFp}

In contrast to the characteristic $0$ case, the model theory of differentially closed fields in positive characteristic remains mostly unexplored since the seminal works of Shelah \cite{sS73} and Wood \cite{cW73, cW74,  cW76} in the case of a single derivation. Several decades later, the first author generalized some of the existing results  to the context of derivations of powers of the Frobenius map~\cite{pK05}. More recently, the second author, together with Ino \cite{ILS23},
has considered variations of the theory DCF$_p$ of differentially closed fields of positive characteristic (in a single derivation).

We fix a natural number $m\geqslant 1$ and work in the context of differential fields equipped with $m$-many commuting derivations.
Following Kolchin's presentation~\cite{eK73}, we will denote the set of distinguished (commuting) derivations by $\Delta=\{\partial_1,\dots,\partial_m\}$. From now on, by a differential field we mean one of the form $(K,\Delta)$ where the $\partial_i$'s are pairwise commuting derivations on $K$. In what follows we will simply say that $K$ is a differential field, rather than $(K,\Delta)$; and similary for differential field extensions $K\subseteq L$.


The class of existentially closed differential fields of characteristic $p$ has been shown to be recursively axiomatisable by Pierce~\cite{dP14}. Hence, the theory of differential fields in characteristic $p$ has a model-companion. We will denote this theory by DCF$_{p,m}$ in the language $\LL_\Delta$ of differential rings. Models of DCF$_{p,m}$ are called differentially closed fields. We will work inside a sufficiently saturated model $(\UU, \Delta)$ of DCF$_{p,m}$. For every differential subfield $K$ of $\UU$, we have that $K^p$ is a subfield of the \emph{field of absolute constants}
\[ \CC_K=\{a\in K\mid \partial_i(a)=0 \text{ for } 1\leqslant i\leqslant m\}\]

A differential field $K$ is said to be \emph{differentially perfect} if every differential field extension is separable, or equivalently, if  $\CC_K=K^p$, as shown by Kolchin  \cite[II.3]{eK73}. It is worth noting that this equivalence can be derived from the fact that for every differential field extension $K\subseteq L$ we have $K\indld_{\CC_K} \CC_L$, see \cite[II.1]{eK73}. In fact, the latter linear disjointness implies that the differential field extension $K\subseteq L$ is separable if and only if $\CC_K\indld_{K^p} L^p$. Thus,  if $K\subseteq L$ is separable and $L$ is differentially perfect, then so is $K$.  On the other hand, if $K\subseteq L$ is separably algebraic and $K$ is differentially perfect, then so is $L$.

\begin{remark}\label{R:diffstr} Let $K$ be a differential field.
\begin{enumerate}[(a)]
\item The differential structure on $K$ extends uniquely to $K^{\sep}$ and $\CC_{K^{\sep}}=\CC_K^{\sep}$.
\item If the constant element $b$ of $\CC_K$ does not belong to $K^p$, then there is a unique extension to a differential structure on $K(b^{1/p})$ with $\partial(b^{1/p})=0$ for all $\partial$ in  $\Delta$. Thus, every differential field embeds into a differential field extension $L$ with $L$ differentially perfect.
\item Every differentially closed field is differentially perfect. Furthermore, the underlying field is separably closed of infinite degree of imperfection.
\end{enumerate}
\end{remark}
\begin{proof}
	For (a), the uniqueness of the extension part is already shown in \cite[II.2]{eK73}. The linear disjointness $K\indld_{\CC_K} \CC_{K^{\sep}}$ yields that $\CC_{K^{\sep}}=\CC_K^{\sep}$.

For (b),  it follows from \cite[VIII.5]{sL02} that all derivations $\partial$ can be simultaneously extended to $K(b^{1/p})$, since the condition  $\partial(b^{1/p})=0$ for all $\partial$ in $\Delta$ ensures that the extensions of the derivations will commute on $K(b^{1/p})$. The last statement follows by a standard chain construction.

For (c), assume that  $K$ is differentially closed. Part (b) yields that $K$ must be differentially perfect whilst (a) shows that $K$ is separably closed. The existential closedness of $K$ ensures  that for every nontrivial differential polynomial $f(x)$ with coefficients in $K$ there exists some element  $a$ in $K$ with $f(a)\neq 0$. Hence, the field extension $K^p= \CC_K\subseteq K$ has infinite degree, by \cite[II.1]{eK73}, as desired.
\end{proof}

Note that it follows from (c) above that a differential subfield $K$ of the differentially closed field $\UU$ is differentially perfect if and only if the field extension $K\subseteq \UU$ is separable.

While the theory DCF$_{p,m}$ does not admit quantifier elimination in the language $\mathcal L_\Delta$, it does once we expand by the $p$-th root function on constants. Namely, we expand $\LL_\Delta$ by the following unary function
\[ \begin{array}{rccl}
    r:& \UU & \to & \UU \\[1mm]
     & x &\mapsto & \begin{cases} x^{1/p}, \text{ if $x$ belongs to $\UU^p=\CC_\UU$} \\
     0, \text{ otherwise.}
    \end{cases}  \end{array}\]

    In the language $\LL_\Delta^r=\LL_\Delta\cup\{r\}$ consider therefore the theory DPF$_{p,m}^r$ of differentially perfect differential fields  with the natural interpretation for the unary function symbol $r$.

    \begin{fact}
    It follows from Kolchin's result \cite[II.2]{eK73} that	the universal theory DPF$_{p,m}^r$ has the amalgamation property, so its model-completion  DCF$_{p,m}^r$ has quantifier elimination. Completeness of DCF$_{p,m}^r$ follows immediately from the fact that the prime field $\F_p$ is a differentially perfect differential field.
    \end{fact}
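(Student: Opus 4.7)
The plan is to prove in turn the three assertions: the amalgamation property of $\mathrm{DPF}_{p,m}^r$, the quantifier elimination for $\mathrm{DCF}_{p,m}^r$, and completeness. The amalgamation step carries the main content; the others follow from standard model-theoretic principles.

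For amalgamation, given differentially perfect fields $K \hookrightarrow L_1$ and $K \hookrightarrow L_2$ in $\LL_\Delta^r$, I would construct an amalgam as follows. Since $K$ is differentially perfect, both inclusions $K \subseteq L_i$ are separable field extensions. Form $L_1 \otimes_K L_2$ and choose a minimal prime $\mathfrak{p}$; set $M := \mathrm{Frac}\bigl((L_1 \otimes_K L_2)/\mathfrak{p}\bigr)$. For each $\partial_j \in \Delta$, the Leibniz rule $\partial_j(a \otimes b) = \partial_j(a) \otimes b + a \otimes \partial_j(b)$ defines a derivation on $L_1 \otimes_K L_2$ (well-defined because the two contributions agree on the common subfield $K$), and these derivations commute since they do so on the generating set $L_1 \cup L_2$. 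Minimal primes in a differential ring are differentially stable (a classical fact of Keigher/Kaplansky), so the derivations descend to the quotient and extend uniquely to the fraction field $M$ by Kolchin~\cite[II.2]{eK73}. Finally, I would apply Remark~\ref{R:diffstr}(b) iteratively to embed $M$ into a differentially perfect overfield $\widetilde M$, interpreting the partial $p$-th root $r$ canonically there. The structure $\widetilde M \in \mathrm{DPF}_{p,m}^r$ amalgamates the given diagram.

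For quantifier elimination, I would invoke the standard principle that the model-companion of a universal theory with the amalgamation property is in fact a model-completion, hence admits QE. The model-companion of the theory of differential fields of characteristic $p$ with $m$ commuting derivations is Pierce's $\DCF_{p,m}$. On any model of $\DCF_{p,m}$, the function $r$ is $\emptyset$-definable: by Remark~\ref{R:diffstr}(c) the subfield of $p$-th powers coincides with the field of constants, which is a definable set on which the $p$-th root is a definable map. Therefore $\mathrm{DCF}_{p,m}^r$ remains the model-companion of $\mathrm{DPF}_{p,m}^r$ in the expanded language $\LL_\Delta^r$, and combined with the previous paragraph it is the model-completion.

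For completeness, the prime field $\F_p$ equipped with trivial derivations and $r$ the identity is a differentially perfect $\LL_\Delta^r$-structure, since $\CC_{\F_p} = \F_p = \F_p^p$. It embeds uniquely as an $\LL_\Delta^r$-substructure of every model of $\mathrm{DCF}_{p,m}^r$, so by quantifier elimination any two such models are elementarily equivalent over $\F_p$, in particular elementarily equivalent. The main obstacle throughout is the amalgamation step, specifically verifying that the full tuple $\Delta$ of commuting derivations extends coherently through the tensor-product construction and further to a differentially perfect overfield; with that in place, the remaining two claims are essentially formal.
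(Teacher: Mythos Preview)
The paper does not supply a proof of this Fact; it records the assertion with a pointer to Kolchin and moves on. Your proposal is the natural unpacking of that assertion and is essentially correct, so there is no divergence of approach to discuss.

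One point does need tightening. The claim that ``minimal primes in a differential ring are differentially stable'' is \emph{false} in positive characteristic as stated: in $R=\F_p[x]/(x^p)$ with $\partial=d/dx$, the unique (hence minimal) prime $(x)$ is not a differential ideal, since $\partial(x)=1$. What is true, and what you actually use, is that minimal primes of a \emph{reduced} differential ring are differential in any characteristic: localizing a reduced ring at a minimal prime $\mathfrak p$ yields a field, and $\mathfrak p$ is precisely the kernel of the localization map, which is a homomorphism of differential rings. In your situation $L_1\otimes_K L_2$ is reduced because $K$ is differentially perfect, so $K\subseteq L_1$ is separable; you should say this explicitly rather than invoke a general Keigher/Kaplansky statement that does not hold here. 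With that correction the amalgamation step is sound, and the passage to a differentially perfect overfield via Remark~\ref{R:diffstr}(b) together with uniqueness of $p^{\text{th}}$ roots guarantees that the embeddings $L_i\hookrightarrow\widetilde M$ respect $r$. The remaining two steps (QE from amalgamation plus model-companion, completeness from the prime substructure $\F_p$) are routine, exactly as the Fact suggests.
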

 The fact above yields some immediate consequences.
    \begin{cor}\label{C:diffperf}
\begin{enumerate}[(a)]
\item Two differentially perfect differential subfields $K$ and $K'$ of $\UU$ have the same type (with respect to some fixed enumeration) if and only if they are $\LL_\Delta$-isomorphic. 
\item Every differentially perfect differential subfield $K$ of $\UU$ is definably closed (in the sense of the theory $\DCF_{p,m}$) and its model theoretic algebraic closure coincides with $K^{\sep}$, which is also its relative (field) algebraic closure within $\UU$. More generally, the definable closure $\dcl(A)$ of a subset $A$ of $\UU$ is the smallest differentially perfect differential subfield of $\UU$ containing $A$ and the model theoretic algebraic closure $\acl(A)$ equals  $\dcl(A)^{\sep}$.
\end{enumerate}
\end{cor}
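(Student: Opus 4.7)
For part (a), my plan is to reduce to the quantifier elimination for $\DCF_{p,m}^r$ recorded in the Fact above. If $K$ and $K'$ are $\LL_\Delta$-isomorphic via some $\sigma$, I would observe that in a differentially perfect field the unary $p$-th root function $r$ is intrinsically determined by the ring-and-derivation structure, namely as the unique $z$ with $z^p=x$ for $x\in\CC_K=K^p$, and as zero otherwise. Hence $\sigma$ commutes with $r$ and upgrades to an $\LL_\Delta^r$-isomorphism, and quantifier elimination yields equality of $\LL_\Delta^r$-types, and thus of $\LL_\Delta$-types, with respect to the chosen enumerations. Conversely, equality of $\LL_\Delta$-types forces the map induced by the enumerations to preserve every atomic $\LL_\Delta$-formula, which is already an $\LL_\Delta$-isomorphism.

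For the description of $\dcl(A)$ in part (b), I would again invoke quantifier elimination: $\dcl^{\LL_\Delta^r}(A)$ is the smallest $\LL_\Delta^r$-substructure of $\UU$ containing $A$, which unpacks as a differential subfield of $\UU$ (field inversion being $\LL_\Delta$-definable) closed under $r$, that is, the smallest differentially perfect differential subfield of $\UU$ containing $A$. Since $r$ is itself $\LL_\Delta$-definable via its defining property, $\dcl^{\LL_\Delta}(A)=\dcl^{\LL_\Delta^r}(A)$, and in particular $\dcl(K)=K$ whenever $K$ is differentially perfect.

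For $\acl(A)=\dcl(A)^{\sep}$, write $K=\dcl(A)$. The inclusion $K^{\sep}\subseteq\acl(A)$ is immediate since separable algebraic elements have finite orbits. The step I expect to be the main obstacle, and where differential perfectness is used most substantively, is to show that the relative field-algebraic closure of $K$ inside $\UU$ coincides with $K^{\sep}$. I plan to argue that if $b\in \UU$ is purely inseparable over $K$, say $b^{p^n}\in K$, then $b^p$ is a constant (because $\partial_i(b^p)=0$ in characteristic $p$), so differential perfectness forces $b^p\in \CC_K=K^p$, whence $b\in K$; induction on $n$ handles the general case. Applying the same observation to $K^{\sep}$, itself differentially perfect by Remark \ref{R:diffstr}(a), shows that any $K$-algebraic element of $\UU$ already lies in $K^{\sep}$.

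To close the argument, I would run a standard Galois-style reasoning: given $b\in\acl(K)$, saturation of $\UU$ together with $\LL_\Delta^r$-quantifier elimination ensure that its $\mathrm{Aut}(\UU/K)$-orbit is a finite set $\{b_1,\dots,b_n\}$, and the coefficients of $\prod_i(X-b_i)$ are $\mathrm{Aut}(\UU/K)$-invariant, hence by the same saturation principle lie in $\dcl(K)=K$. This polynomial witnesses that $b$ is field-algebraic over $K$, so combining with the previous paragraph gives $b\in K^{\sep}$.
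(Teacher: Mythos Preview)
Your proposal is correct and follows exactly the route the paper intends: the paper offers no explicit proof of this corollary, presenting it as an immediate consequence of quantifier elimination for $\DCF_{p,m}^r$, and your plan unpacks precisely that deduction. The only minor wrinkle is in your purely-inseparable step: as written, the sentence ``$b^p$ is a constant \dots so differential perfectness forces $b^p\in\CC_K=K^p$'' tacitly assumes $b^p\in K$, which is only the $n=1$ case; your induction (pass to $c=b^{p^{n-1}}$, apply the $n=1$ case to get $c\in K$, then recurse) fixes this, so just phrase it that way when you write it up.
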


Shelah has shown that the theory DCF$_p=$ DCF$_{p,1}$ is stable. Standard type-counting methods can be used to deduce that  DCF$_{p,m}$ is stable. However, we will give a self-contained proof of the stability of DCF$_{p,m}$ in Section \ref{S:nf} by providing a complete description of forking independence. For this,  we need the following algebraic notions and results.

\begin{definition}\label{D:diff_sep_indep}
  Consider a differential field extension $k\subseteq K$ and a subset $A$ of $K$. We say that $A$ is \emph{differentially independent}, resp. \emph{separably differentially independent}, over $k$ if the family (or rather, the sequence, for we do not allow repetitions)
 $$\left(\partial_i^j(A)\right)_{1\leqslant i\leqslant m, 0\leqslant j}$$
is algebraically independent, resp. separably independent,  over $k$. 
\end{definition}

\begin{fact}\cite[Corollary 5, \S \ II.9]{eK73}\label{F:kolchin}
Consider a differential field extension $k\subseteq K$ and a subset $A\subseteq K$ which is differentially separably independent over $k$. The field of constants $\CC_{\diff{k}{A}}$ equals $\diff{k}{A}^p \CC_k $, where $\diff{k}{A}$ denotes the differential subfield of $K$ generated by $A$ over $k$.
\end{fact}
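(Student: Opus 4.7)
The easy inclusion $\diff{k}{A}^p\CC_k \subseteq \CC_{\diff{k}{A}}$ is immediate: the Leibniz rule in characteristic $p$ gives $\partial(x^p) = 0$, and $\CC_k$ consists of constants by definition. Writing $L = \diff{k}{A}$ and reducing to finite $A$ by a standard direct-limit argument, the plan is to prove $\CC_L \subseteq L^p \CC_k$ by exhibiting a $p$-basis of $L$ over $k$ and combining with Kolchin's classical linear disjointness between a differential field and the constants of any extension.

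First, I would argue that the set $B$ of all iterated derivatives of elements of $A$ forms a $p$-basis of $L$ over $k$: Remark \ref{R:sep_ind_p-indep} converts the hypothesis of separable differential independence of $A$ over $k$ into $p$-independence of $B$ in $k(B) = L$, and since $B$ generates $L$ as a field over $k$, this $p$-independent set is maximal. Fact \ref{F:pindep_pbasis}(a) then exhibits the $p$-monomials in $B$ as an $L^p k$-basis of $L$. Given $c \in \CC_L$, one writes $c = \sum_\nu c_\nu \mu_\nu$ uniquely with $c_\nu \in L^p k$ and $\mu_\nu$ a $p$-monomial in $B$.

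The core step is to apply each derivation $\partial \in \Delta$, use Leibniz, and exploit leading-term bookkeeping. I would endow $B$ with a well-order giving higher rank to derivatives of higher order, so that differentiating a nontrivial $p$-monomial strictly raises its leading term. Since $\partial$ maps $L^p k$ into itself (because $\partial(L^p) = 0$ and $\partial(k) \subseteq k$), each $\partial(c_\nu)$ stays at $p$-degree zero in the basis and cannot cancel the newly raised leading terms; this forces $c_\nu = 0$ for all $\nu \neq 0$. Hence $c = c_0 \in L^p k \cap \CC_L$. To upgrade $c_0$ to $L^p \CC_k$, I would invoke the classical linear disjointness $k \indld_{\CC_k} \CC_L$ from \cite[II.1]{eK73}: any $\CC_k$-basis $\{y_j\}$ of $k$ stays $L^p$-linearly independent (as $L^p \subseteq \CC_L$), so writing $c_0 = \sum_j t_j y_j$ with $t_j \in L^p$ and applying $\partial$ yields $\sum_j t_j \partial y_j = 0$; re-expanding in the same basis then forces $t_j = 0$ whenever $y_j \notin \CC_k$.

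The main obstacle is the leading-term bookkeeping, especially when multiple commuting derivations act on $p$-monomials in mixed derivatives of the $a \in A$: one needs a monomial order compatible with all the $\partial_i$ so that the raised leading term is not reabsorbed by another summand, which is essentially the technical content of Kolchin's \cite[\S II.9]{eK73}.
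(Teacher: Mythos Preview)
The paper does not supply a proof of this statement: it is recorded as a Fact with a direct citation to Kolchin \cite[Corollary 5, \S II.9]{eK73} and no argument is given. So there is no ``paper's own proof'' to compare your proposal against.

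That said, your outline is essentially the standard route to Kolchin's result: exhibit the set of derivatives of $A$ as a $p$-basis of $L=\diff{k}{A}$ over $k$ via Remark~\ref{R:sep_ind_p-indep}, expand a constant in the resulting $L^pk$-basis of $p$-monomials, and kill the non-trivial monomials by a leading-term argument under differentiation. Your honest flagging of the ranking/leading-term bookkeeping as ``essentially the technical content of Kolchin's \S II.9'' is accurate; in the several-derivation case one must work with the full set of mixed derivative operators $\partial_1^{e_1}\cdots\partial_m^{e_m}$ and an orderly ranking on them, which is exactly what Kolchin sets up.

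One small point on your final step. Your passage from $c_0\in L^pk\cap \CC_L$ to $c_0\in L^p\CC_k$ is correct but the justification you wrote (choose a $\CC_k$-basis $\{y_j\}$ of $k$, differentiate, and read off $t_j=0$ for $y_j\notin\CC_k$) hides a linear-algebra step: after re-expanding one obtains a homogeneous $\CC_k$-linear system in the $t_j$ whose only $\CC_k$-solution is trivial, and one must then invoke that a linear system with coefficients in $\CC_k$ which has only the trivial solution over $\CC_k$ has only the trivial solution over any extension (here $\CC_L\supseteq L^p$). A cleaner variant is to reverse the roles: write $c_0=\sum_i s_i y_i$ with $s_i\in L^p\CC_k\subseteq \CC_L$ chosen $\CC_k$-linearly independent and $y_i\in k$; then the $s_i$ remain $k$-linearly independent by $\CC_L\indld_{\CC_k} k$, and $0=\partial c_0=\sum_i s_i\,\partial y_i$ forces $\partial y_i=0$ for every $\partial\in\Delta$, whence $y_i\in\CC_k$ and $c_0\in L^p\CC_k$.
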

Given a differential subfield $K$ of our differentially perfect ambient model $\UU$, notice that $\CC_K^{1/p}$ is a subset of $\UU$ containing $K$, for $\CC_K$ contains $K^p$.  However, the field $\CC_K^{1/p}$ need not be stable under the derivations, so the iterates $\partial^j_i(a)$ of elements of $\CC_K^{1/p}$ need not lie in $\CC_K^{1/p}$.

The following notion will be fundamental for our description of non-forking. We are not aware of any occurrence of this notion in the existing literature. The name we chose reflects a crucial property stated in Lemma \ref{L:differentially traplem}.

\begin{definition}\label{D:trim}
The differential subfield $K$ of $\UU$ is said to be \emph{differentially transcendentally almost perfect}, in short \emph{differentially trap}, if there exists a $p$-basis $A$ of $\CC_K^{1/p}$ over $K$ which is separably differentially independent over $K$.
\end{definition}

Whilst the elements of $A$ are algebraic over $K$, they are purely inseparable over $K$, so it makes sense to demand that they are  separably differentially independent over $K$.

Note that every differentially perfect field is differentially trap with $A$ the empty $p$-basis. Whilst differential perfectness is an absolute condition, independent of the embedding into $\UU$, being differentially trap is a notion that is relative to the differential structure of $\UU$
(to see this, it suffices to consider different subfields of $\CC_\UU$).
\begin{lemma}\label{L:differentially traplem}
Given a differentially trap differential subfield $K$ of $\UU$ with the corresponding $p$-basis $A$ of $\CC_K^{1/p}$ over $K$ (so  $A$ is separably differentially independent over $K$), the differential field  $\diff{K}{A}$ is differentially perfect and $\acl(K)=\diff{K}{A}^{\sep}$.
\end{lemma}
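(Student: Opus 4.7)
My plan splits the argument into showing differential perfectness of $\diff{K}{A}$ first, and then deducing the identification of $\acl(K)$ from it.

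For differential perfectness, the approach is to apply Fact~\ref{F:kolchin} to the differentially separably independent set $A$ over $K$, obtaining the equality $\CC_{\diff{K}{A}} = \diff{K}{A}^p \CC_K$. Since the containment $\diff{K}{A}^p \subseteq \CC_{\diff{K}{A}}$ is automatic, everything reduces to showing $\CC_K \subseteq \diff{K}{A}^p$. Here the hypothesis that $A$ is a $p$-basis of $\CC_K^{1/p}$ over $K$ is the key leverage: by maximality, $\CC_K^{1/p} \subseteq \CC_K \cdot K(A)$, and because $\CC_K \subseteq K$ this compositum collapses to $K(A)$. Raising to the $p$-th power (and using that Frobenius is a field homomorphism in characteristic $p$) yields $\CC_K \subseteq K(A)^p \subseteq \diff{K}{A}^p$, which is exactly what is needed.

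For the second assertion, by Corollary~\ref{C:diffperf} one has $\acl(K) = \dcl(K)^{\sep}$, where $\dcl(K)$ is the smallest differentially perfect differential subfield of $\UU$ containing $K$. By what was just established, $\diff{K}{A}$ is differentially perfect and contains $K$, whence $\dcl(K) \subseteq \diff{K}{A}$. For the reverse, since $\dcl(K)$ is differentially perfect we have $\CC_{\dcl(K)} = \dcl(K)^p$, and $\CC_K \subseteq \CC_{\dcl(K)}$; uniqueness of $p$-th roots in characteristic $p$ then forces $\CC_K^{1/p} \subseteq \dcl(K)$, so $A \subseteq \dcl(K)$ and therefore $\diff{K}{A} \subseteq \dcl(K)$. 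Combining both inclusions gives $\diff{K}{A} = \dcl(K)$, and hence $\acl(K) = \dcl(K)^{\sep} = \diff{K}{A}^{\sep}$.

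The delicate step I expect to require the most care is the inclusion $\CC_K^{1/p} \subseteq \CC_K \cdot K(A)$ derived from maximality. The subtlety is that when one tries to enlarge $A$ by some $b \in \CC_K^{1/p}$, the resulting $p$-dependence might a priori manifest not as $b \in \CC_K \cdot K(A)$ directly but as some $a \in A$ falling into $\CC_K \cdot K((A \setminus \{a\}) \cup \{b\})$. Handling this second case relies on the exchange property for the $p$-independence pregeometry (standard in positive characteristic), which forces $b \in \CC_K \cdot K(A)$ anyway; both possibilities therefore collapse to the desired inclusion.
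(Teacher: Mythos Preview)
Your argument is correct and the first part matches the paper's proof essentially line for line: both invoke Fact~\ref{F:kolchin} and the identity $\CC_K^{1/p}=K(A)$ (your worry about the ``delicate step'' is unnecessary, since $p$-closure is a pregeometry and a $p$-basis is automatically generating). For the second part you take a slightly different route: the paper simply observes that $A$ is (field-)algebraic over $K$, hence $\diff{K}{A}^{\sep}\subseteq \acl(K)$, while you prove the sharper equality $\diff{K}{A}=\dcl(K)$ by arguing that $\CC_K^{1/p}\subseteq \dcl(K)$ via differential perfectness of $\dcl(K)$; both routes are short and your version yields a bit more information.
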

\begin{proof}
Since $A$ is a $p$-basis of $\CC_K^{1/p}$ over $K$, we have that
\[ \CC_K^{1/p}=\left(\CC_K^{1/p}\right)^p   K(A)=\CC_K  K(A)=K(A),\]
hence we obtain that $\CC_K$ equals $K(A)^p$. By Fact \ref{F:kolchin}, we deduce that
\[ \CC_{\diff{K}{A}}=\CC_K  \diff{K}{A}^p=K(A)^p \diff{K}{A}^p=\diff{K}{A}^p,\]
and thus the differential field $\diff{K}{A}$ is differentially perfect, as desired.

For the last statement, note that the extension $\diff{K}{A}\subseteq \UU$ is separable (since $\diff{K}{A}$ is differentially perfect), and thus so is $\diff{K}{A}^{\sep}\subseteq \UU$, by Fact \ref{F:pindep_pbasis} (b) and (c). The previous characterization of the model-theoretic algebraic closure in DCF$_{p,m}$ yields that
\[ \acl(K)\subseteq \acl\left(\diff{K}{A}^{\sep}\right)=\diff{K}{A}^{\sep} \subseteq \acl(K),\] using that $A$ is algebraic over $\CC_K$, and thus over $K$.
\end{proof}
\begin{remark}\label{R:differentially trap}
\begin{enumerate}[(a)]
\item A directed union of a system of differentially trap differential subfields of $\UU$ is again a differentially trap differential subfield.
\item Given a differential subfield $K$ of $\UU$ and a $p$-basis $A$ of $\CC_K^{1/p}$ over $K$, we have that $A$ is separably differentially independent over $K$ if and only if the family
\[(\partial_i(A))_{1\leqslant i\leqslant m}\]
is differentially independent over $K$.
\item The field $K$ is differentially trap if and only if every $p$-basis of $\CC_K^{1/p}$ over $K$ is separably differentially independent over $K$.
\item Suppose that $K\subseteq L$ is a separably algebraic extension of differential fields. Then, $K$ is differentially trap if and only if $L$ is.

\end{enumerate}
\end{remark}
\begin{proof}
The proof of (a) is standard, so we will not include it. For the proof of (b), notice first (see the proof of Lemma \ref{L:differentially traplem}) that
 $K(A)=\CC_K^{1/p}$. One direction follows immediately from Lemma \ref{L:sepalg}, for the $p$-independent set $A$ over $K$ is separably independent over $K$ in $K(A)=\CC_K^{1/p}$ by Remark \ref{R:sep_ind_p-indep}. Therefore, we need only show that, given a separably differentially independent $p$-basis $A$ of $\CC_K^{1/p}$ over $K$, the family \[  \widetilde{A}:= \bigcup_{\substack{1\leqslant i\leqslant m\\ 1 \leqslant j\in \N}}  \partial_i^j(A)\] is algebraically independent over $K$. In fact, we will show that it is algebraically independent over $\CC_K^{1/p}$ (which contains $K$). This algebraic independence  over $\CC_K^{1/p}$  follows from Fact \ref{F:sep_indep} (b) once we show that the extension $\CC_K^{1/p}\subseteq \diff{K}{A}=K(A\cup \widetilde{A})$ is separable.

The linear disjointness $K\indld_{\CC_K} \CC_\UU$ yields that $K\indld_{\CC_K} K\langle A\rangle^p$. By assumption, the set $A\cup \widetilde{A}$ is separably independent over $K$ and thus $p$-independent over $K$ in $K(A\cup \widetilde{A})= K\langle A\rangle$ by Remark \ref{R:sep_ind_p-indep}. In particular, the set $A$ is $p$-independent over $K$ in $K\langle A\rangle$, which yields by Fact \ref{F:pindep_pbasis} (a) that $\CC_K^{1/p}$ is linearly disjoint from $K\langle A\rangle^p K$ over $K$. Transitivity of linear disjointness gives now that \[\CC_K^{1/p}\indld_{\CC_K} \  K\langle A\rangle^p.\] Since $\CC_K=\left(\CC_K^{1/p}\right)^p$, we conclude that the extension $\CC_K^{1/p}\subseteq K\langle A\rangle$ is separable, as desired.

For the proof of (c), let $K$ be a differentially trap differential field  witnessed by the $p$-basis $A$. We need to show that every other $p$-basis $B$ of $\CC_K^{1/p}$ is separably differentially independent over $K$. By (b), it is enough to show that $\partial_1(B)\cup\dots\cup \partial_m(B)$ is differentially independent over $K$. Following the proof of \cite[Proposition 2.7]{ILS23}, it suffices to show that for any $b_1,\ldots, b_n$ in $B$ and $\ell\geqslant 0$, the family
\[ (\partial_i^j(b_k)\mid 1\leqslant i\leqslant m \ , \  1\leqslant j \leqslant \ell \ , \ 1\leqslant k\leqslant n )\]
is algebraically independent over $K$.

Since $A$ is also a $p$-basis of $\CC_K^{1/p}$ over $K$, there is (without loss of generality) some $s\geqslant n$ and elements $a_1,\ldots, a_s\in A$ such that \[ b_1,\ldots,b_n\in \left(\CC_K^{1/p}\right)^p  K(a_1,\dots,a_s)=K(a_1,\dots,a_s).\] Furthermore,  we may assume that $K(a_1,\dots,a_s)=K(b_1,\dots,b_n,a_{n+1},\dots,a_s)$,  so
\begin{multline*} K(\partial_i^j a_k \mid  1\leqslant i\le m \ ; \ 0\leqslant j\leqslant \ell \ ; \ 1\leqslant k\leqslant s)= \\ = K(\partial_i^j b_k,\partial_i^j a_r \mid 1\leqslant i\leqslant m\ ; \  0\leqslant j\leqslant \ell \ ; \  1\leqslant k\leqslant n \ ; \ n+1\leqslant r\leqslant s).\end{multline*} Since the $a_i$'s and $b_i$'s are purely inseparable over $K$, the transcendence degree on the left-hand side has the largest possible value, namely $\ell m s$. In particular, we deduce that
\[ (\partial_i^j(b_k)\mid 1\leqslant i\leqslant m \ , \  1\leqslant j \leqslant \ell \ , \ 1\leqslant k\leqslant n )\]
is algebraically independent over $K$, as desired.

For the proof of (d), the linear disjointness $K\indld_{\CC_K} \CC_L$ yields that the extension $\CC_K\subseteq \CC_L$ is again separably algebraic.  Fact \ref{F:pindep_pbasis} (c) above yields that every $p$-basis of $\CC_K^{1/p}$ over $K$ remains a $p$-basis of $\CC_L^{1/p}$ over $L$. Together with part (c) above, we conclude that $K$ is differentially trap if and only if $L$ is.
    \end{proof}

\section{Forking independence and stationarity in DCF$_{p,m}$}\label{S:nf}
As in the previous section, we work inside a sufficiently saturated model $(\UU,\partial)$ of the theory DCF$_{p,m}$. As we have mentioned already, mimicking the argument for the ordinary case DCF$_p$, the theory DCF$_{p,m}$ can be shown to be stable by a type-counting argument (see Remark~\ref{R:criterion_consequences}(1) in the Appendix), yet it is not superstable, for the infinite descending chain of definable subfields
\[ \UU\supsetneq \UU^p \supsetneq \UU^{p^2}\supsetneq\cdots  \] witnesses that the $U$-rank cannot be ordinal-valued. In addition, DCF$_{p,m}$ does not have elimination of imaginaries (see \cite[Remark 4.3]{mMcW95} or for a more recent and detailed account \cite[Lemma 4.11]{cB25}); in particular, the fact that types over acl-closed sets are stationary cannot be deduced merely from stability but we will see it as a consequence of the results in this section (namely, from our algebraic description of forking independence).

Now, the work of Kim and Pillay \cite[Theorem 4.2]{KP97} (or rather, the stable version thereof due to Harnik and Harrington \cite[Theorem 5.8]{HH84}) implies that for stability it suffices to exhibit a notion of independence satisfying the well-known properties of forking independence, including stationarity over elementary substructures. Our motivation for this paper is to provide a self-contained exposition of the stability of DCF$_{p,m}$ together with an algebraic description of forking independence over arbitrary base subsets, so we will avoid taking shortcuts such as the one provided in \cite[Proposition 1.4]{MP24}.


In the following fact we note that
a description of non-forking in DCF$_p$ \emph{over models} appeared implicitly in Shelah's work \cite{sS73}.

\begin{fact}\label{F:Shelah}
	Let $K$ and $L$ be differentially perfect differential subfields of $\UU$ both containing an elementary substructure $M$ of $\UU$.  If $K\indld_{M} L$, then $K L$ is differentially perfect \cite[proof of Theorem 9]{sS73}.
	In particular, if we denote by $\inddcf \ $ forking independence in the theory DCF$_{p, m}$, we have that \[ K\inddcf_{M} L \  \iff \ K\indld_{M} L \ \stackrel{\ref{F:sep_indep} (a)}{\iff} \  K\indacf_{M} L\ {\iff} \ K\indscf_{M} L, \]
    For the first equivalence, left-to-right follows from Chatzidakis's \cite[Theorem 3.5]{zC99}, while right-to-left follows as an application of \cite[Proposition 1.4]{MP24}. The last equivalence follows from Lemma \ref{L:pindep_pbasis} (c) and Srour's description of forking in SCF$_{p,\infty}$ (see Remark~\ref{R:indepSCF_notion}).
\end{fact}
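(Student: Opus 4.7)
Since $M\prec\UU$ is an elementary substructure of $\DCF_{p,m}$, $M$ is itself differentially closed, hence differentially perfect, and its underlying field is separably closed (Remark~\ref{R:diffstr}(c)). In particular both $M\subseteq K$ and $M\subseteq L$ are separable extensions (by differential perfectness of $M$), and since $M$ is separably closed each of these extensions is in fact regular. These ambient properties of $M$ are the structural inputs on which everything else rests.

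The heart of the statement is the assertion that $K\indld_M L$ implies $KL$ is differentially perfect. My plan is to deduce this through Lemma~\ref{L:pindep_pbasis}(c): since both $M\subseteq K$ and $M\subseteq L$ are separable, it suffices to establish $K\indacf_M L$ together with $K\indp_M L$, for then Lemma~\ref{L:pindep_pbasis}(c) yields that $KL\subseteq\UU$ is separable, which is equivalent to $KL$ being differentially perfect (by the comment following Remark~\ref{R:diffstr}). The algebraic independence $K\indacf_M L$ is immediate from $K\indld_M L$ by Fact~\ref{F:sep_indep}(a). For the $p$-independence, I would follow Shelah's approach from \cite{sS73}: take a subset $A\subseteq K$ which is $p$-independent over $M$ in $\UU$; by Fact~\ref{F:pindep_pbasis}(a) the $p$-monomials in $A$ are $\UU^p M$-linearly independent, so in particular $M$-linearly independent, and linear disjointness $K\indld_M L$ immediately lifts this to $L$-linear independence. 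The main obstacle I foresee is upgrading $L$-linear independence to $\UU^p L$-linear independence; this is precisely where the hypothesis $M\prec\UU$ is used, since the elementarity of $M$ (combined with $M$ being a model of $\SCF_{p,\infty}$) lets one absorb $p$-th powers from the ambient $\UU$ into the base through a $p$-basis of $\UU$ drawn from $M$.

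Once the key claim is in hand, the four-fold equivalence assembles cleanly. The equivalence $K\indld_M L\iff K\indacf_M L$ is Fact~\ref{F:sep_indep}(a) applied to the regular extension $M\subseteq K$. The equivalence $K\indacf_M L\iff K\indscf_M L$ combines Srour's description (Remark~\ref{R:indepSCF_notion}) with the key claim: starting from $K\indacf_M L$ one gets $K\indld_M L$, hence $KL\subseteq\UU$ is separable by the key claim, hence $K\indp_M L$ by Lemma~\ref{L:pindep_pbasis}(c), and Srour's characterization then delivers $K\indscf_M L$; conversely, Srour directly furnishes $K\indacf_M L$. Finally, the equivalence $K\inddcf_M L\iff K\indld_M L$ reduces to quoting \cite[Theorem 3.5]{zC99} for the forward direction and \cite[Proposition 1.4]{MP24} for the converse.
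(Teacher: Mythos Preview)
Your assembly of the four equivalences from the key claim is correct and matches the paper's own citations. The problem lies in your proposed proof of the key claim itself, that $K\indld_M L$ forces $KL$ to be differentially perfect.

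You want to reach this via $K\indp_M L$ (in $\UU$) and Lemma~\ref{L:pindep_pbasis}(c), and you correctly isolate the hard step: upgrading $L$-linear independence of the $p$-monomials in $A$ to $\UU^p L$-linear independence. But your proposed fix, ``absorbing $p$-th powers through a $p$-basis of $\UU$ drawn from $M$'', does not work: a $p$-basis of $M$ remains $p$-independent in $\UU$ by separability of $M\subseteq\UU$, but it is not a $p$-basis of $\UU$, since by saturation there exist elements of $\UU\setminus\UU^p M$. More fundamentally, look at the converse clause of Lemma~\ref{L:pindep_pbasis}(c): under $K\indacf_M L$ (with $K\subseteq\UU$ and $L\subseteq\UU$ separable), the conditions ``$K\indp_M L$ in $\UU$'' and ``$KL\subseteq\UU$ is separable'' are \emph{equivalent}. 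So your route is circular; you are trying to establish the conclusion by way of a condition that is already equivalent to it, and some genuinely independent input is needed.

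The paper does not prove the key claim but cites Shelah \cite[proof of Theorem~9]{sS73}, and the remark immediately following Fact~\ref{F:Shelah} (together with Remark~\ref{R:criterion_consequences}(5)) indicates the actual mechanism: it is a differential-algebraic argument. Since $M$ is elementary in a differentially closed field, $M$ is Picard--Vessiot closed, and an adaptation of \cite[Lemma~7.6]{MPZ20} then shows directly that $\CC_{KL}=(KL)^p$ from the linear disjointness. The $p$-independence $K\indp_M L$ in $\UU$ is not an input to this argument; it falls out afterwards via Lemma~\ref{L:pindep_pbasis}(c).
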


In the following remark, by a \emph{Picard-Vessiot closed} differential field $K$ we mean one satisfying the following: For each positive integer $n$, if $A_1,\dots,A_m$ are in $Mat_n(K)$ and there exists a differential field extension $K\subseteq L$ having $s$-many $L$-linearly independent vectors in $L^n$ each a solution of
\[ \partial_i\begin{pmatrix} x_1 \\ \vdots \\ x_n \end{pmatrix}=A_i \begin{pmatrix} x_1 \\ \vdots \\ x_n \end{pmatrix}\quad \text{ for }i=1,\dots,m,\]
then there exist $s$-many $K$-linearly independent solutions in $K^n$.

\begin{remark}
    Fact \ref{F:Shelah} above holds more generally: one can weaken the assumption that $M$ is an elementary substructure to simply asking that $M$ is acl-closed and Picard-Vessiot closed. Indeed, a straightforward adaptation of \cite[Lemma 7.6]{MPZ20} (in particular of Claim 4 thereof) shows that, when $M$ is PV-closed, $K\indld_{M} L$ implies that $K L$ is differentially perfect. The equivalence of the different notions of independence can be seen as a consequence of our description of forking independence in Theorem~\ref{T:main}.
    \end{remark}

\begin{definition}\label{D:indstar}
Given differentially perfect differential subfields $k$, $K$ and $L$ of $\UU$ with $k\subseteq K\cap L$, set \[ K\ind^*_k L \ \iff \ \begin{cases} K\indacf\limits_k L, \text{ and }\\[1mm]
 \text{the compositum $KL$ is differentially trap}
\end{cases}.\]
More generally, given subsets $A$, $B$ and $C$ of $\UU$ with $C\subseteq A\cap B$, set $A\ind^*_C B$ if $\dcl(A)\ind^*_{\dcl(C)} \dcl(B)$, for the definable closure $\dcl(D)$ of a set $D$ is the smallest differentially perfect differential subfield of $\UU$ containing $\F_p(D)$ (see Corollary~\ref{C:diffperf}~(b)).
\end{definition}

\begin{remark}\label{R:remark_indstar}
Let us fix differentially perfect differential subfields $k$, $K$ and $L$ of $\UU$ with $k\subseteq K\cap L$.
\begin{enumerate}[(a)]
  \item By Remark \ref{R:differentially trap} (d), we have the following
$$K^{\sep}\ind^{*}_{k^{\sep}} L^{\sep}\quad  \iff\quad  K\ind^{*}_{k} L.$$
Therefore, using Corollary \ref{C:diffperf} (b), we obtain
$$A\ind^{*}_C B\quad  \iff\quad  \acl(A)\ind^{*}_{\acl(C)} \acl(B)$$
for any $A,B$ and $C$ as in Definition \ref{D:indstar}.

\item If the compositum $KL$ is differentially perfect, we have the following equivalence \[ K\ind^*_k L \ \iff \ K\indacf_k L,\]
since differentially perfect fields are differentially trap. In particular, we recover from Fact \ref{F:Shelah} that $\ind^*$ coincides with $\indacf$ for differentially perfect fields when the base subfield is an  elementary substructure of $\UU$ (and a posteriori algebraic independence coincides with forking independence for differentially perfect fields over elementary substructures).

\item Suppose $K\ind^{\ACF}_kL$. Then, the compositum field $KL$ is differentially perfect (and is hence differentially trap) if and only if $K\ind^{\SCF}_k L$, by Lemma \ref{L:pindep_pbasis} (c). Therefore, we have the following implications:
\[K\ind^{\SCF}_kL\quad \Longrightarrow\quad K\ind^{*}_kL\quad \Longrightarrow \quad  K\ind^{\ACF}_kL. \]
To see that none of these implications can be reversed, consider $x,y\in \UU$ algebraically independent over $\F_p$ and such that $\partial(x)=1$ (for simplicity, we assume $m=1$ here). Then, we take
$$k=\F_p,\quad K=\F_p(x),\quad L=\F_p(x+y^p).$$
Clearly, $K\nind^{\SCF}_k L$ and $K\ind^{\ACF}_k L$, but $KL$ has differential trap if and only if $y$ is separably differentially transcendental over $\F_p(x,y^p)$ (see Example \ref{exampled1} for some extra details).

\item Just as $\ind^{\SCF}$ involves two generally unrelated conditions (namely, algebraic independence and $p$-independence, see Remark~\ref{R:indepSCF_notion}), for $\ind^{*}$ we need both of the conditions in Definition~\ref{D:indstar}; i.e., the field independence
$K\indacf\limits_k L$ does not imply that $KL$ is differentially trap and the opposite implication need not hold either. This can be seen using the example from (c) above and considering e.g. subfields of $\UU^{p^{\infty}}$.
\end{enumerate}
\end{remark}


\begin{theorem}\label{T:main}
The ternary relation $\ind^*$ defined above is invariant under automorphisms and symmetric. It satisfies local and finite character as well as extension and transitivity/monotonicity. Furthermore, it satisfies  stationarity whenever the base set is an $\acl$-closed subfield (in the model theoretic sense) of $\UU$.

In particular, the theory $\DCF_{p, m}$ is stable and the relation $\ind^*$ coincides with forking independence.  Moreover, types over $\acl$-closed subsets of $\UU$ are stationary.
\end{theorem}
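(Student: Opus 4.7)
The plan is to verify that $\ind^{*}$ satisfies the standard axioms of a stable independence relation---invariance, symmetry, local and finite character, monotonicity and transitivity, extension, and stationarity over elementary substructures---and then invoke the stable version of Kim--Pillay due to Harnik and Harrington \cite[Theorem 5.8]{HH84} to conclude simultaneously that $\DCF_{p,m}$ is stable and that $\ind^{*}$ coincides with forking independence. The final assertion on stationarity over arbitrary $\acl$-closed subsets of the real sort will then fall out from a strengthening of the stationarity verification, using Corollary~\ref{C:diffperf}, which identifies $\acl$-closed subsets of the real sort with the differentially perfect, separably algebraically closed differential subfields of $\UU$.

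Most axioms split cleanly along the two clauses of the definition of $\ind^{*}$: invariance under $\LL_\Delta$-automorphisms is immediate since both clauses are intrinsically model-theoretic; symmetry holds because $\indacf$ is symmetric and $KL=LK$; monotonicity and transitivity on the algebraic side are classical for $\indacf$, and on the trap side they follow from Remark~\ref{R:differentially trap} (c,d) combined with the standard transitivity of $p$-bases in separable towers; finite and local character reduce, via Remark~\ref{R:differentially trap} (a), to the corresponding properties of $\indacf$. For extension, given $K\ind^{*}_{k}L$ and $L\subseteq L'$, my plan is first to apply extension for $\indacf$ to realize an algebraically independent conjugate $K'$ of $K$ over $kL$ inside $\UU$, and then to exploit Fact~\ref{F:kolchin} together with the genericity provided by $K'\indacf_{k}L'$ to upgrade the separably differentially independent $p$-basis of $\CC_{KL}^{1/p}$ over $KL$ witnessing the trap condition into one witnessing the trap condition for $K'L'$.

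The main obstacle is \emph{stationarity}. Fix an $\acl$-closed base $k$---differentially perfect and separably algebraically closed in $\UU$ by Corollary~\ref{C:diffperf}---and assume $K\ind^{*}_{k}L$, $K'\ind^{*}_{k}L$ with $\tp(K/k)=\tp(K'/k)$, so that there is an $\LL_\Delta$-isomorphism $\phi:K\to K'$ over $k$. Since $K\indacf_{k}L$ and $K'\indacf_{k}L$, the map $\phi\cup\mathrm{id}_L$ extends to an isomorphism $\psi:KL\to K'L$ of differential fields. By Lemma~\ref{L:differentially traplem}, $\acl(KL)=\diff{KL}{A}^{\sep}$ for some separably differentially independent $p$-basis $A$ of $\CC_{KL}^{1/p}$ over $KL$, and analogously $\acl(K'L)=\diff{K'L}{A'}^{\sep}$; the trap condition on both sides allows one to transport $A$ via $\psi$ to a matching $A'$ and extend $\psi$ canonically to an $\LL_\Delta$-isomorphism of the differentially perfect hulls, then of their separable closures. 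Quantifier elimination for $\DCF_{p,m}^{r}$ (recorded after Corollary~\ref{C:diffperf}) then yields $\tp(K/kL)=\tp(K'/kL)$, completing the stationarity step for $\acl$-closed bases. The hardest point here is the coherent construction of the matching $p$-basis $A'$ and the verification that the extension of $\psi$ respects the $r$-function; once this is established, Harnik--Harrington delivers stability of $\DCF_{p,m}$ together with the equality $\ind^{*}=\inddcf$, and the final clause of the theorem is exactly the strengthened stationarity just proved.
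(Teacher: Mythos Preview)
Your stationarity sketch is essentially the paper's argument, and invariance/symmetry are indeed immediate. However, several of the other axioms are not nearly as automatic as you suggest, and your proposal contains genuine gaps.

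\textbf{Local character.} You claim this reduces to local character of $\indacf$ via Remark~\ref{R:differentially trap}(a). It does not. Given countable differentially perfect $K$ and arbitrary $L$, finding a countable $\ell\subseteq L$ with $\dcl(K\cup\ell)\indacf_{\dcl(\ell)} L$ says nothing about whether $\dcl(K\cup\ell)\cdot L$ is differentially trap; Remark~\ref{R:differentially trap}(a) concerns directed unions of fields already known to be trap, which is irrelevant here. The paper instead runs a chain argument using local character of $\indscf$ (not $\indacf$) to produce $\ell$ with $K'\indscf_{\ell} L$, where $K'=\dcl(K\cup\ell)$; the point is that $\indscf$ forces the compositum $K'L$ to be differentially \emph{perfect} (Lemma~\ref{L:pindep_pbasis}(c)), hence trap, so the second clause of $\ind^{*}$ comes for free. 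Your reduction to $\indacf$ loses exactly this.

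\textbf{Extension.} Your plan to ``upgrade'' a separably differentially independent $p$-basis of $\CC_{KL}^{1/p}$ to one for $K'L'$ has no mechanism behind it. The $p$-basis of $\CC_{K'L'}^{1/p}$ over $K'L'$ is in general strictly larger, and the new elements must also have derivatives that are differentially independent over $K'L'$; nothing in ``genericity provided by $K'\indacf_k L'$'' or Fact~\ref{F:kolchin} gives you control over how $\UU$ derives these $p$-th roots. The paper's argument works the other way: it builds an \emph{abstract} differential field extension of $K_1L$ (using \cite[Chapter IV, Theorem 17]{nJ12}) in which the derivatives of a chosen $p$-basis are forced to be differentially independent, and only then embeds everything into $\UU$ over $L$ by saturation. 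This is the missing idea.

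\textbf{Transitivity and monotonicity.} These are not ``standard transitivity of $p$-bases in separable towers.'' The paper needs two technical claims (Claims~\ref{Cl:bases_union} and~\ref{Cl:baseS1}) to show that a $p$-basis of $\CC_{KL_1}^{1/p}$ over $KL_1$ decomposes as $S_0\cup S_1$ with $S_0$ a $p$-basis of $\CC_{KL}^{1/p}$ over $KL$ and, crucially, that $S_1$ remains a $p$-basis of $\CC_{\dcl(K\cup L)L_1}^{1/p}$ over $\dcl(K\cup L)L_1$. Only then can one splice together the separable differential independence of $S_0$ (from $K\ind^{*}_k L$) and of $S_1$ (from $\dcl(K\cup L)\ind^{*}_L L_1$) to get trap for $KL_1$. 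Remark~\ref{R:differentially trap}(c,d) alone does not supply this decomposition.
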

\begin{proof}
	
	Throughout this proof, let as assume that $k, K, L$ and $L_1$ are differential subfields of $\UU$ such that $k\subseteq K$ and $k\subseteq L\subseteq L_1$. It is enough to check our independence conditions for acl-closed sets only, therefore (using Remark \ref{R:remark_indstar} (a)) we will assume that $k, K, L$ and $L_1$ are algebraically closed subfields of $\UU$ (in the model theoretic sense). In particular, all fields extensions among $k,K,L,L_1$ and $\UU$ are regular. Types are always complete and taken in the sense of the theory DCF$_{p,m}$, unless explicitly stated.

Invariance and symmetry of $\ind^*$ follow immediately from the definition of $\ind^*$.

\begin{claim}\label{Cl:ext}
The relation $\ind^*$ satisfies extension: Given some fixed enumeration of $K$, there exists a differential subfield $K'$ of $\UU$ with \[ K'\ind^*_k L\] such that $K'$ and $K$ have the same type over $k$ (with respect to the chosen enumeration).
\end{claim}
\begin{claimproof}
The algebraically closed field $\UU^{\alg}$ is sufficiently saturated, so  there exists a $k$-isomorphic  copy $K_1$ of $K$ in  $\UU^{\alg}$ with $K_1\indacf_k L$. Transporting the differential structure from $K$ to $K_1$, we may assume that $K_1$ is $k$-isomorphic to $K$ as differential fields.

The extension $k\subseteq K_1$ is regular (for $k\subseteq K$ is), so $K_1\indld_kL$ by Fact \ref{F:sep_indep} (a). In particular, using that
\[K_1 L\cong_k\mathrm{Frac}\left(K_1\otimes_kL\right),\]
we may equip $K_1 L$ with commuting derivations $\widetilde\Delta=\{\widetilde\partial_1,\dots,\widetilde\partial_m\}$ 
extending both the differential structures of $K_1$ and $L$. Choose now a $p$-basis $A_1$ of $\CC_{K_1L}^{1/p}$ over $K_1L$. By \cite[Chapter IV, Theorem 17]{nJ12}, there exists a differential field $(F,\widetilde\partial_1,\dots,\widetilde\partial_m)$ which is a differential field extension of $K_1L$, whose cardinality $|F|$ is small with respect to the saturation of $\UU$, such that $\CC^{1/p}_{K_1L}\subseteq F$ and \[ \widetilde\partial_1(A_1)\cup\cdots \cup \widetilde\partial_m(A_1)\] is differentially independent over $K_1L$. By Remark \ref{R:diffstr}, we may assume that $F$ is differentially perfect.  Using the saturation of our existentially closed model $\UU$, we can now embed $F$ over $L$ in $\UU$ via $\Psi:M \hookrightarrow \UU$. Setting $K'=\Psi(K_1)$, notice that the differential subfield $K'$ of $\UU$ is differentially perfect and isomorphic to $K$ over $k$, so they have the same type over $k$ by Corollary \ref{C:diffperf} (a).

In order to show that $K'\ind^*_k L$, notice that $K'$ and $L$ are linearly disjoint, and thus algebraically independent, over $k$, by construction. Thus, we need only show that $K'L$ is differentially trap, which follows immediately from the fact that the set $A'=\Psi(A_1)$ is a $p$-basis of $\CC_{K'L}$ over $K'L$ with $\partial_1(A')\cup\cdots \cup \partial_m(A')$ differentially independent over $K'L$.
\end{claimproof}

\begin{claim}\label{Cl:stat}
	The relation $\ind^*$ satisfies stationarity over
    $\acl$-closed differential subfields of $\UU$:  Given some enumeration of $K$ and a $k$-isomorphic copy $K'$ in $\UU$ such that
	\[ K\ind^*_k L \text{ and  } K'\ind^*_k L,\]
	then $K'$ and $K$ have the same type over $L$.
\end{claim}
\begin{claimproof}
	This proof goes along the same lines as the proof of extension in Claim \ref{Cl:ext}, so we will be succinct. Without loss of generality, all the field extensions considered are regular, so there exists  a differential $L$-isomorphism $\Psi$ mapping $KL$ to $K'L$ with $\Psi(K)=K'$, by a standard tensor product argument.
	
	Both fields $KL$ and $K'L$ are differentially trap,  so choose a $p$-basis $A$ of $\CC_{KL}^{1/p}$ over $KL$ with $\partial_1(A)\cup\cdots \cup \partial_m(A)$ differentially independent over $KL$. Thus, the map $\Psi$ extends to a differential $L$-isomorphism \[ \Psi':\diff{KL}{A} \to\diff{K'L}{A'}\] with $A'=\Psi'(A)$ a $p$-basis of $\CC_{K'L}^{1/p}$ over $K'L$ .  By Lemma \ref{L:differentially traplem} and Remark \ref{R:differentially trap} (c), both differential fields $\diff{KL}{A}$ and $\diff{K'L}{A'}$ are differentially perfect. These two fields are isomorphic via $\Psi'$ (which maps $K$ onto $K'$), so we deduce that $K$ and $K'$ have the same type over $L$, by Corollary \ref{C:diffperf} (a), as desired.
\end{claimproof}

Recall that $k,K,L$ and $L_1$ are  differential subfields of $\UU$ such that $k\subseteq K$ and $k\subseteq L\subseteq L_1$ with all field extensions regular (hence, separable). In particular, algebraic independence coincides with linear disjointness by Fact \ref{F:sep_indep} (a).

\begin{claim}\label{Cl:bases_union}
If $K\indld_k L_1$, then every subset $S$ of $\CC_{KL}^{1/p}$ which is $p$-independent in $\CC_{KL}^{1/p}$ over $KL$ remains $p$-independent in $\CC_{KL_1}^{1/p}$ over $KL_1$.

In particular, there exists a $p$-basis of $\CC_{KL_1}^{1/p}$ over $KL_1$ of the form $S_0\cup S_1$, where $S_0$ is a $p$-basis of $\CC_{KL}^{1/p}$  over $KL$ and $S_1$ is $p$-independent over $KL(S_0)$. In particular, the set $S_0$ lies in $\acl(K\cup L)$.
\end{claim}
\begin{claimproof}
By Lemma \ref{L:pindep_pbasis} (b),  the following commutative diagram consists of separable field extensions:
\begin{center}
	\begin{tikzcd}
		K  \arrow[hookrightarrow]{r} & KL \arrow[hookrightarrow]{r} & KL_1
		\\
		k \arrow[hookrightarrow]{u}  \arrow[hookrightarrow]{r} & L \arrow[hookrightarrow]{u} \arrow[hookrightarrow]{r} & L_1 \arrow[hookrightarrow]{u}
	\end{tikzcd}
\end{center}
	Now, since $KL\subseteq KL_1$ is separable, we have that $KL\indld_{(KL)^p} (KL_1)^p$. Using that Frobenius is a field monomorphism, we deduce that $(KL)^{1/p}\indld_{KL} KL_1$, so $\CC^{1/p}_{KL}\indld_{KL_1} KL$. By Fact \ref{F:pindep_pbasis} (a), the $p$-monomials on $S$ are linearly disjoint over $KL$ and thus over $KL_1$, so $S$ as a subset of $\CC_{KL_1}^{1/p}$ is $p$-independent over $KL_1$.
	
	The last statement follows immediately from the fact that $p$-closure is a pregeometry.
\end{claimproof}

\begin{claim}\label{Cl:mono}
The relation $\ind^*$ satisfies monotonicity: If $K\ind^*_k L_1$, then $K\ind^*_k L$.

In particular, the relation $\ind^*$ satisfies finite character: \[ K\ind^*_k L_1 \ \iff \ \diff{k}{\bar a}\ind^*_k \diff{k}{\bar b} \text{ for all tuples  $\bar a$ in $K$ and $\bar b$ in $L_1$}.\]
\end{claim}
\begin{claimproof}
	Since algebraic independence satisfies monotonicity, we need only show that $KL$ is differentially trap. The independence $K\ind^*_k L_1$ yields that  $K\indacf_k \ L_1$, and thus  $K\indld_kL_1$ by Fact \ref{F:sep_indep} (a). Now, the field $KL_1$ is differentially trap, so by Remark \ref{R:differentially trap} (c), the basis $S_0\cup S_1$ of $\CC_{KL_1}^{1/p}$  as in Claim \ref{Cl:bases_union} is separably differentially independent over $KL_1$. In particular the $p$-basis $S_0$ of $\CC_{KL}^{1/p}$ is separably differentially independent over $KL_1$ and thus over $KL$. We conclude that  $KL$ is differentially trap, as desired.
	
Finite character now follows from Symmetry, Monotonicity and Remark \ref{R:differentially trap} (a).
\end{claimproof}

\begin{claim}\label{Cl:locchar}
	The relation $\ind^*$ satisfies local character: If $K$ is countable, then there exists some countable differentially perfect differential subfield $\ell$ of $L$ with $K \ell\ind^*_{\ell} L$.
\end{claim}
\begin{claimproof}
By Claim \ref{Cl:mono} and Remark \ref{R:remark_indstar} (c), it is enough to find $\ell$ as above and a differentially perfect subfield $K'$ of $\UU$ containing $K\ell$ such that $K'\ind^{\SCF}_{\ell} L$. To do that, we follow the proof of \cite[Fact 2.3]{pK05}.
	
By the local character of $\ind^{\SCF}$, there is a countable subfield $\ell_0\subseteq L$ such that $K\ell_0\ind_{\ell_0}L$. Set now $K_1=\dcl(\ell_0\cup K)$ the smallest differentially perfect differential subfield of $\UU$ containing $\ell_0\cup K$, by Corollary \ref{C:diffperf} (b). Notice that $K_1$  is again countable.

By the local character of $\ind^{\SCF}$ again, where we name constants for the elements of the differentially perfect differential subfield $\dcl(\ell_0)$ of $K_1\cap L$, there is a countable subfield $\ell_1\subseteq L$ such that $K_1\ind_{\ell_1}L$ and $\dcl(\ell_0)\subseteq \ell_1$. We iterate and define inductively an increasing sequence of countable subfields
$$\ell_0\subseteq \ell_1\subseteq  \ldots \subseteq L,$$ such that the differentially perfect differential subfield $\dcl(\ell_n)$ belongs to $\ell_{n+1}$ as well as an increasing sequence of differentially perfect differential subfields
$$K=K_0\subseteq K_1\subseteq K_2\subseteq \ldots\subseteq \UU$$
such that
\[ \dcl(\ell_n)\subseteq K_{n+1} \text{ and } \ K_n\ind^{\SCF}_{\ell_n}L \ \text{for all $n$ in $\N$}.\]

The differential fields
\[K'=\bigcup_n K_n \ \text{ and } \ \ell=\bigcup_n \ell_n\] are differentially perfect, by construction. By monotonicity and local character of $\ind^{\SCF}$, we have that $K'\indscf_{\ell} L$, as desired.
\end{claimproof}

For the next two claims, we will need the extra assumption that the compositum $KL$ is a differentially trap differential field, which will always be the case whenever $K\ind^*_k L_1$, so in particular if $K\ind^*_k L$ by Claim \ref{Cl:mono}. Note that the compositum $KL$ \emph{need not} be differentially perfect, so in the proof of transitivity, we will need to work instead with its definable closure $\dcl(K\cup L)$, which is differentially perfect by Corollary \ref{C:diffperf} (b).
\begin{claim}\label{Cl:baseS1}
If the differential field $KL$ is differentially trap and $S_0\cup S_1$ is a $p$-basis of $\CC^{1/p}_{KL_1}$ over $KL_1$ as in Claim \ref{Cl:bases_union}, then $S_1$ is a $p$-basis of $\CC^{1/p}_{\dcl(K\cup L)L_1}$ over $\dcl(K\cup L)L_1$.
\end{claim}
\begin{claimproof}
Since $S_0\cupdot S_1$ is a $p$-basis of $\CC^{1/p}_{KL_1}$ over $KL_1$, we get that $S_1$ is a $p$-basis of $\CC^{1/p}_{KL_1}$ over $KL_1(S_0)$. Moreover, the set \[ \widetilde{S_0} := \bigcup_{\substack{1\leqslant i\leqslant m\\ 1 \leqslant j\in \N}}  \partial_i^j(S_0) \] is algebraically independent over $\CC^{1/p}_{KL_1}$ and
\[ KL_1\langle S_0\rangle=KL_1(S_0)(\widetilde{S_0}).\]
Using Fact \ref{F:pindep_pbasis} (d), we obtain that $S_1$ is a $p$-basis of $\CC^{1/p}_{KL_1}KL_1\langle S_0\rangle$ over $KL_1\langle S_0\rangle$. By Lemma \ref{L:differentially traplem}, we get
\[\dcl(K\cup L)L_1=KL\langle S_0\rangle L_1=KL_1\langle S_0\rangle.\]
To finish the proof using Fact \ref{F:pindep_pbasis} (c), we need to notice that
\[ \CC^{1/p}_{KL_1}KL_1\langle S_0\rangle \subseteq \CC^{1/p}_{KL_1\langle S_0\rangle} \subseteq
\left(\CC^{1/p}_{KL_1}KL_1\langle S_0\rangle\right)^{\sep}.\]
The first inclusion is immediate, so we need only show the second one. Remark~\ref{R:diffstr}(a) yields that
\begin{IEEEeqnarray*}{rCl}
\CC_{KL_1\langle S_0\rangle} & \subseteq & \CC_{\left(KL_1\langle S_0\rangle\right)^{\sep}}\\
 & = & \left(\CC_{KL_1\langle S_0\rangle}\right)^{\sep}\\
 & = & \left(\CC_{KL_1}\left(KL_1\langle S_0\rangle\right)^p\right)^{\sep},
\end{IEEEeqnarray*}
so the desired inclusion follows after composing with  the inverse of the Frobenius map, which is injective.
\end{claimproof}

We have now all the ingredients to establish the transitivity
$\ind^*$.
\begin{claim}\label{Cl:transit}
The relation $\ind^*$ satisfies transitivity:
\[K\ind^{*}_k L\ \ \text{and}\ \ \dcl(K\cup L)\ind^{*}_L L_1 \ \ \ \ \   \iff\ \ \ \ \  K\ind^{*}_k L_1.\]
\end{claim}
\begin{claimproof}
For ($\Leftarrow$),  assume that $K\ind^{*}_k L_1$.  Claim \ref{Cl:mono} yields that $K\ind^{*}_k L$, so we need only show that  $K\ind^{*}_L L_1$, or equivalently, that $\dcl(K\cup L)\ind^*_L L_1$, for $KL$ need not be differentially perfect (but its definably closure $\dcl(K\cup L)$ is).

We first show that  $\dcl(KL)\ind^{\ACF}_L L_1$. Since the field
 $KL$ is differentially trap, by Claim \ref{Cl:mono}, it follows from  Lemma \ref{L:differentially traplem} (and the fact that $S_0\subseteq (KL)^{\alg}$) that  \begin{equation}
  \tag{$1$}
\dcl(K\cup L)=KL(\widetilde{S_0}),
\label{one}\end{equation} where \[ \widetilde{S_0}:=\bigcup_{\substack{1\leqslant i\leqslant m\\ 1 \leqslant j\in \N}}  \partial_i^j(S_0)).\]

Now, because the set $(\partial_i(S_0\cup S_1))_{1\leq i\leq m}$ is differentially independent over $KL_1$, then so is $(\partial_i(S_0))_{1\leq i\leq m}$.
In particular, we obtain the following independence
\begin{equation}
  \tag{$2$}
\widetilde{S_0}\ind^{\ACF}_{KL} KL_1.
\label{two}
\end{equation}
Combining (\ref{one}) and (\ref{two}), we deduce that
\begin{equation}
  \tag{$3$}
\dcl(K\cup L)\ind^{\ACF}_{KL} L_1.
\label{three}
\end{equation}
Since $K\ind^{\ACF}_k L_1$ and $k\subseteq L\subseteq L_1$, we have that $KL\ind^{\ACF}_L L_1$, which yields together with (\ref{three}) that $\dcl(K\cup L)\ind^{\ACF}_L L_1$.

Let us now conclude this direction showing that $\dcl(K\cup L)L_1$ is differentially trap. By Claim \ref{Cl:baseS1},  it suffices to show that $(\partial_i(S_1))_{1\leq i\leq m}$ is differentially independent over $\dcl(K\cup L)L_1$.
We know that
$(\partial_i(S_0))_{1\leq i\leq m}\cup (\partial_i(S_1))_{1\leq i\leq m}$ is differentially independent over $KL_1$. Therefore, the set $(\partial_i(S_1))_{1\leq i\leq m}$ is differentially independent over \[ KL_1\langle (\partial_i(S_0))_{1\leq i\leq m}\rangle= KL(\widetilde{S_0})L_1.\] Now, by the previous discussion, we have that $\dcl(K\cup L)=KL(\widetilde{S_0})$, so $(\partial_i(S_1))_{1\leq i\leq m}$ is differentially independent over $\dcl(K\cup L)L_1$, as desired.

For the direction ($\Rightarrow$), assume both independences $K\ind^{*}_k L$ and $K\ind^{*}_L L_1$. In particular, we have the field independences \[ K\indacf_k L \ \text{ and } \ KL\indacf_L L_1.\]
Transitivity of algebraic independence $\ind^{\ACF}$ yields that $K\ind^{\ACF}_k L_1$, so we need only show that $KL_1$ is differentially trap, assuming that $KL$ is, for $K\ind^*_k L$.

By Claim \ref{Cl:baseS1}, the set $S_1$ is a $p$-basis of $\CC^{1/p}_{\dcl(KL)L_1}$ over 
$\dcl(K\cup L)L_1$. Observe that the differential field $\dcl(K\cup L)L_1$ is trap, by the independence \[ \dcl(K\cup L)\ind^*_L L_1,\]
so the
set $(\partial_i(S_1))_{1\leq i\leq m}$ is differentially independent over $\dcl(K\cup L)L_1$. Now, the elements of the $p$-basis $S_0$ of $\CC_{KL}^{1/p}$ (and their derivatives) are clearly contained in $\dcl(K\cup L)$, so $(\partial_i(S_1))_{1\leq i\leq m}$ is differentially independent over $\diff{KL_1}{S_0}$. Summarizing, there exists by Claim \ref{Cl:bases_union} a $p$-basis $S_0\cup S_1$ of $\CC_{KL_1}^{1/p}$ such that
\[(\partial_i(S_0))_{1\leq i\leq m}\cup (\partial_i(S_1))_{1\leq i\leq m}\]
is differentially  independent over $KL_1$, and thus the differential field $KL_1$ is trap, as desired.
\end{claimproof}
The stability of DCF$_{p,m}$ and the description of non-forking independence follows now from Claims \ref{Cl:ext}, \ref{Cl:stat}, \ref{Cl:mono}, \ref{Cl:locchar} and \ref{Cl:transit}.
\end{proof}

We finish this section with some considerations on the behaviour of forking independence on the set of solutions of the equation $\partial(x)=1$ in the theory DCF$_p$=DCF$_{p,1}$. This study was the starting point of our research and we believe it may bring some intuition to our  notion of differentially trap fields.
\begin{example}\label{exampled1}
Working inside a sufficiently saturated model $\UU$ of the theory DCF$_p$,
every solution $a$ to the equation $\partial(x)=1$ is transcendental over the perfect field $\F_p$. Moreover, the differential field $\diff{\F_p}{a}=\F_p(a)$ is differentially perfect since $\partial(a)=1$.
If $b$ is any other solution, the element $b-a$ belongs to $\CC_\UU$ and thus we can consider $\lambda=(b-a)^{1/p}$ in $\CC_{\F_p(a)\F_p(b)}^{1/p}$. Since $b=a+\lambda^p$, we have \[\F_p(a)\F_p(b)=\F_p(a, b)=\F_p(a, \lambda^p),\] so $\lambda$ generates $\CC_{\F_p(a, b)}^{1/p}=\F_p(a, \lambda)$ over $\F_p(a,b)$.

Assume now that $a$ and $b$ are $\inddcf$-independent, so in particular $a$ and $b$ (and thus $a$ and $\lambda$) are algebraically independent over $\F_p$. It follows that $\{\lambda\}$ is a $p$-basis of $\CC_{\F_p(a, b)}^{1/p}$ over $\F_p(a,b)$, since \[ \lambda \notin \left( \CC_{\F_p(a, b)}^{1/p} \right)^p \F_p(a, b)=\F_p(a,b)=\F_p(a,\lambda^p).\] Our description of $\ind^{\DCF}$ from Theorem \ref{T:main} over the differentially perfect differential subfield $\F_p$ implies that $\partial(\lambda)$ must be differentially transcendental over $\F_p(a,b)$. Using that differential algebraicity induces a pregeometry (see \cite[Proposition 8 in \S II.8]{eK73}),
it is easy to see that this latter condition is equivalent to  $\lambda$ itself being differentially transcendental over $\F_p$, since both $a$ and $b$ are differentially algebraic.

Thus, it follows from Theorem \ref{T:main} that for $a, b$ solutions of $\partial(x)=1$ \[ a\inddcf b \ \iff \ \begin{cases} a\indacf b \\[1mm]
\text{ and } \\[1mm]
   (b-a)^{1/p} \text{ is differentially transcendental over } \F_p
\end{cases}  \]
\end{example}
The above example highlights a fundamental heuristic difference between forking independence in DCF$_{p,m}$ with respect to other well-known theories of existentially closed fields with operators such as DCF$_0$ or ACFA, which are \emph{$1$-based} over the reduct to the pure algebraically closed field, in the sense of \cite[Definition~4.1]{BMPW15}.  However, if there was a description of forking independence in DCF$_p$ in terms of the reduct to the theory of separably closed fields, Example \ref{exampled1} would yield that differential transcendence over $\mathbb{F}_p$ reduces to a condition in the pure separably closed field, which is not the case.

\section{Bernoulli differential equations in positive characteristic}\label{S:Ber}
An ordinary differential equation of the form
 \[\partial(T)=T^n \quad \text{ for } n\in \mathbb Z\]
 is a special case of the \emph{Bernoulli differential equation} \cite{bernoulli}. As shown by Leibniz, the above equation reduces to the linear differential equation $\partial(X)=1$ via the following variable substitution (for $n\neq 1$):
\[X= \frac{1}{(1-n)}T^{1-n}.\]
In the characteristic 0 case (i.e., in the theory DCF$_0$), it follows that the set of solutions of the equation $\partial(T)=T^n$ is \emph{almost internal to the constant field}: working inside a sufficiently saturated differentially closed field $\mathcal M$ of characteristic $0$,  there exists a countable  differential subfield $k$ such that every solution of the Bernoulli equation is algebraic over the compositum subfield $k\CC_{\mathcal M}$.

In the positive characteristic case, Leibniz's method works when $p$ does not divide $n-1$  and gives a finite-to-one definable function from the set of solutions of the equation $\partial(T)=T^n$ onto $\UU$. Note that when $n=1$ we obtain a similar result: the set of solutions of $\partial(T)=T$ is in definable bijection with $\CC_\UU=\UU^p$, and hence also with $\UU$ (after composing with the inverse of Frobenius).


Now, if $n=pm+1$ for some $m\neq 0$, then Leibniz's method no longer works for obvious reasons and in fact the solution set of $\partial(T)=T^n$ is not in definable finite-to-finite correspondence with $\UU$. In this section we will exhibit an unexpected model-theoretic explanation of this. We start with a couple of general remarks.

\begin{remark}\label{R:isol}
Consider the equation $\partial(T)=T^n$ with $n$ in $\mathbb Z$.
\begin{enumerate}[(a)]
\item
    The formula
    \[\partial(T)=T^n\land T\ne 0\]
    isolates a complete stationary type over $\F_p$: Indeed, if $a\ne 0$ and $\partial(a)=a^n$, then $a\notin \UU^p$, for its derivative is not zero. In particular, the differential field $\diff{\F_p^{\alg}}{a}=\F_p^{\alg}(a)$ is differentially perfect, for the extension $\F_p^{\alg}(a)\subseteq \UU^p$ is separable by Fact \ref{F:pindep_pbasis} (b). Corollary \ref{C:diffperf} (a) yields now that any two non-zero solutions of the equation have the same type over the algebraically closed differential field $\F_p^{\alg}$ and hence the type $tp(a/\mathbb F_p)$ is stationary by Theorem~\ref{T:main}.

    \item If $n=p^km+1$ where $k>0$, $p$ does not divide $m$ and $\partial(a)=a^n$, then the element $b=ma^m$ is a solution of the equation $\partial(X)=X^{p^k+1}$. Since $\UU$ is separably closed, the map $x\mapsto mx^m$ is a surjective map with finite fibers (of size $m$) from the set of solutions in of the equation $\partial(T)=T^n$ onto the set of solutions of $\partial(X)=X^{p^k+1}$. Since we later show that the set of non-zero solutions of $\partial(X)=X^{p^k+1}$ is geometrically trivial and $\aleph_0$-categorical (having the induced structure of a pure set, see Theorem \ref{T:type_trivial}), we get that the set of non-zero solutions of $\partial(T)=T^{n}$  is geometrically trivial and $\aleph_0$-categorical as well in this case.
\end{enumerate}
\end{remark}
In view of Remark \ref{R:isol}(b), we will now focus our attention on studying the behaviour of the generic solutions of the equation $\partial(T)=T^{p^k+1}$ for $k>0$ from the point of view of geometric stability.
For the following proposition, we first need an auxiliary result, which is immediate.
\begin{remark}\label{R:p-mon_diff}
Consider a $p$-monomial $w=X_1^{\alpha_1}\ldots X_n^{\alpha_n}$ with $0\leqslant \alpha_i\leqslant p-1$ for all $1\leqslant i\leqslant n$.
Given
$\bar a=(a_1,\dots,a_n)\in \UU^n$, we have that
\[a_i\frac{\partial w}{\partial X_i}\left(\bar{a}\right)=\alpha_i\, w\left(\bar{a} \right) \text{ for each } 1\leqslant i\leqslant n,\]
where on the right-hand side all the $\alpha_i$'s are considered as elements of $\F_p$. In particular,  if each $a_i$ is a solution of $\partial(T)=T^{p^{k_i}+1}$ for some $k_i\geqslant 1$, then \[ \partial(w(\bar a))=\sum_{i}^{n} \frac{\partial w}{\partial X_i}(\bar a) \partial(a_i) = \sum_{i}^{n} \frac{\partial w}{\partial X_i}(\bar a) a_i^{p^{k_i}+1}=  \left(\sum\limits_{i=1}^n \alpha_i a_i^{p^{k_i-1}} \right)^p w(\bar a).\]
\end{remark}

\begin{prop}\label{P:trivial_new}
Consider pairwise distinct elements $a_1,\ldots,a_n$, each one a non-zero solution of the equation
$\partial(X)=X^{p^{k_i}+1}$ for integers $k_i\ge 1$. Then, the elements $a_1,\ldots,a_n$ are $p$-independent over $\mathbb{F}_p$ in $\UU$ and thus they are  algebraically independent over $\F_p$, by Fact \ref{F:pindep_pbasis} (e).
Furthermore, the differential field  \[ \diff{\F_p}{a_1,\ldots, a_n}=\F_p(a_1,\ldots, a_n)\] is differentially perfect, by Lemma \ref{L:pindep_pbasis} (c) and Remark \ref{R:isol} (a).
\end{prop}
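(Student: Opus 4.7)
The plan is to derive a contradiction via an iterated Vandermonde argument powered by the derivation $\partial$. Set $\theta_i:=a_i^{p^{k_i-1}}\in\UU$ and, for each $p$-monomial $w=X_1^{\alpha_1}\cdots X_n^{\alpha_n}$ with $0\leq \alpha_i\leq p-1$, define $\mu_w:=\sum_{i}\alpha_i\theta_i$, so that $\partial(w(\bar a))=\mu_w^p\,w(\bar a)$ by Remark~\ref{R:p-mon_diff}. Assume, towards a contradiction, that $\{a_1,\dots,a_n\}$ is $p$-dependent over $\F_p$ in $\UU$. By Fact~\ref{F:pindep_pbasis}(a), we then have a nontrivial relation $\sum_w c_w^p\,w(\bar a)=0$ with $c_w\in\UU$ not all zero.

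Since $c_w^p$ and $\mu_w^p$ both lie in $\UU^p=\CC_\UU$, a straightforward induction yields
\[
\sum_w c_w^p\,\mu_w^{jp}\,w(\bar a)=0 \qquad \text{for every } j\geq 0.
\]
Letting $S:=\{w:c_w\neq 0\}$ and specializing to $j=0,1,\dots,|S|-1$, this becomes a square linear system in the unknowns $c_w^p\,w(\bar a)$ ($w\in S$) whose coefficient matrix is Vandermonde in $(\mu_w^p)_{w\in S}$. Hence, if the $\mu_w$'s (equivalently, by Frobenius injectivity, the $\mu_w^p$'s) are pairwise distinct on $S$, the matrix is invertible, forcing $c_w^p\,w(\bar a)=0$ and thus $c_w=0$ on $S$ (since $a_i\neq 0$), a contradiction. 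Distinctness of the $\mu_w$'s for distinct $p$-monomials is exactly the $\F_p$-linear independence of $\theta_1,\dots,\theta_n$, so the proposition reduces to verifying this latter independence.

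To establish it, suppose for contradiction $\sum_i\gamma_i\theta_i=0$ with $\gamma_i\in\F_p$ not all zero, and write $e_i:=k_i-1\geq 0$, $e_{\min}:=\min\{e_i:\gamma_i\neq 0\}$. If $e_{\min}\geq 1$, every surviving term is a $p$-th power, so the identity $\sum_i\gamma_i\theta_i=\bigl(\sum_i\gamma_i a_i^{p^{e_i-1}}\bigr)^p$ (valid in characteristic $p$, since $\gamma_i^{1/p}=\gamma_i$ in $\F_p$) allows us to descend by one in every exponent. Iterating this reduction $e_{\min}$ times produces a relation in which the smallest exponent on the support equals $p^0=1$; let $T$ denote the set of indices realizing this minimum, so that $k_i=e_{\min}+1$ for every $i\in T$.

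Applying $\partial$ annihilates the remaining $p$-th-power terms and leaves $\sum_{i\in T}\gamma_i a_i^{q+1}=0$ with $q:=p^{e_{\min}+1}$. Further iteration of $\partial$, using $\partial(a_i^{jq+1})=a_i^{(j+1)q+1}$ for $i\in T$ (as $jq+1\equiv 1\pmod p$), gives $\sum_{i\in T}\gamma_i a_i^{jq+1}=0$ for every $j\geq 1$. Taking $j=1,\dots,|T|$ produces a square system whose coefficient matrix, after pulling scalar factors from each column, decomposes as a Vandermonde matrix in the pairwise distinct nonzero values $a_{i_\ell}^q$ times a diagonal factor; invertibility then follows from the pairwise distinctness of the $a_{i_\ell}$'s together with the injectivity of Frobenius, forcing $\gamma_i=0$ for every $i\in T$, the desired contradiction. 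I expect the main technical hurdle to be precisely this $\F_p$-linear independence of the $\theta_i$'s; once it is established, the first Vandermonde argument closes out the proof essentially mechanically.
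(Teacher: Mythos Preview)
Your argument is correct and reaches the same intermediate checkpoint as the paper---namely, the $\F_p$-linear independence of the $a_i^{p^{k_i-1}}$ (equivalently, of the $a_i^{p^{k_i}}$)---but the route on both sides of that checkpoint is genuinely different. The paper works by \emph{minimality}: it first proves the auxiliary fact that $1,a_1,\ldots,a_n$ are linearly independent over $\UU^p$ (via a shortest-relation argument forcing $a_1^{p^{k_1}}=\cdots=a_m^{p^{k_m}}$ and hence some $a_i\in\UU^p$), and then, for the main $p$-independence, it takes a dependence of minimal length, differentiates \emph{once}, and uses minimality to conclude that all the coefficients $\sum_i\alpha_ia_i^{p^{k_i}}$ coincide, producing the $\F_p$-relation that the auxiliary claim rules out. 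You replace both minimality arguments with \emph{Vandermonde} arguments: iterating $\partial$ many times to build an invertible system. Your descent by $p$-th roots to reduce the $\F_p$-independence of the $\theta_i$ to a single-exponent Vandermonde over $T$ is a clean substitute for the paper's $\UU^p$-independence claim. The trade-off: your approach is slightly more computational but entirely self-contained, while the paper's auxiliary claim (independence of $1,a_1,\ldots,a_n$ over $\UU^p$) is a stronger standalone statement than what you need, and the minimal-length trick avoids writing down explicit matrices.
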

\begin{proof}
We first start with an easy observation.
\begin{claim*}
The elements $1,a_1,\ldots, a_n$ are linearly independent over $\UU^p$.
\end{claim*}
\begin{claimproofstar}
Suppose not and we will observe first that then $a_1,\ldots, a_n$ are linearly dependent over $\UU^p$. Take $\lambda_0,\lambda_1,\ldots,\lambda_n$ in $\UU$ which are not all $0$ and such that
\[\lambda_0^p+\lambda_1^p a_1+\cdots+\lambda_n^p a_n=0.\]
It is clear that then not all $\lambda_1,\ldots,\lambda_n$ are $0$ and we have
\[ 0= \partial(-\lambda_0^p)=\partial \left( \sum_{i=1}^n \lambda_i^p a_i\right) = \sum_{i=1}^n \lambda_i^p \partial(a_i)= \sum_{i=1}^n \lambda_i^p a_i^{p^{k_i}+1}= \sum_{i=1}^n \left(\lambda_i a_i^{p^{k_i-1}}\right)^p a_i.\]
If $1\leqslant i\leqslant n$ is such that $\lambda_i\neq 0$, then $\lambda_i a_i^{p^{k_i-1}}\neq 0$ as well, so $a_1,\ldots, a_n$ are linearly dependent over $\UU^p$ indeed.

After possibly renumbering, we may take a smallest $m\geqslant 2$ such that $a_1,\ldots,a_m$ are linearly dependent over $\UU^p$ and each proper subset of $\{a_1,\ldots,a_m\}$ is linearly independent over $\UU^p$. Let us take again $\lambda_1,\ldots,\lambda_n$ in $\UU$  such that
\[\lambda_1^p a_1+\cdots+\lambda_m^p a_m=0.\]
Applying $\partial$, we obtain
\[0=\partial(0)=\partial \left( \sum_{i=1}^m \lambda_i^p a_i\right) =  \sum_{i=1}^m \lambda_i^p a_i^{p^{k_i}} a_i.\]


By minimality of $m$, we can conclude that
$$a_1^{p^{k_1}}=a_2^{p^{k_2}}=\cdots= a_m^{p^{k_m}}$$
Since $m\geqslant 2$ and $a_1\neq a_2$, we must have that $k_1\neq k_2$. We may assume $k_1<k_2$. This implies that $a_1=a_2^{p^{k_2-k_1}}\in \UU^p$, a contradiction.
\end{claimproofstar}

In order to show now that the elements $a_1,\ldots, a_n$ are $p$-independent, assume otherwise and choose a non-trivial $p$-dependence $P(\bar X)$ of least possible length (i.e., smallest number of $p$-monomials appearing nontrivially in $P$), so \[ 0=P(\bar a)=\sum_{\alpha} \lambda_{\alpha}^p w_{\alpha}(\bar a)\] for some elements $\lambda_{\alpha}$ in $\UU$ and $p$-monomials \[ w_{\alpha}(\bar X)= \prod_{i=1}^{n} X_i^{\alpha_i}.\]

 Now,
  \begin{align*} 0=\partial(P(\bar a))= {} & \sum_{\alpha} \lambda_{\alpha}^p \partial(w_{\alpha}(\bar a))\stackrel{\ref{R:p-mon_diff}}{=}  \sum_{\alpha} \lambda_{\alpha}^p \big(\sum_{i=1}^n \alpha_i a_i^{p^{k_i}} \big) w_{\alpha}(\bar a)\end{align*}



Our minimal choice of the length of $P$ implies that for any $\alpha$ and $\beta$, with $\lambda_\alpha$ and $\lambda_\beta$ appearing in $P$, we have
 $$\sum_{i=1}^n\alpha_i a_i^{p^{k_i}}=\sum_{i=1}^n\beta_i a_i^{p^{k_i}}$$
 Taking $\alpha$ and $\beta$ such that $\alpha_1\neq \beta_1$, we can re-write the above as
 $$\gamma_1 a_1^{p^{k_1}}+\gamma_2 a_2^{p^{k_2}}+ \cdots + \gamma_n a_n^{p^{k_n}}=0$$
 with $\gamma_1\neq 0$ and $\gamma_i\in \mathbb F_p$. We may assume that $k_1= k_i$ for all $1\leqslant i\leqslant i_0$ and $k_1<k_i$ for all $i_0+1\leqslant i\leqslant n$. Hence, the above yields
 $$\gamma_1 a_1+\cdots+\gamma_{i_0}a_{i_0}\in \UU^p$$
 Since $\gamma_i=\gamma_i^p$ and $\gamma_1\neq 0$, this contradicts the Claim.
 \end{proof}

Proposition \ref{P:trivial_new} together with Theorem \ref{T:main} and Remark \ref{R:remark_indstar} (c) yield now the following result.
\begin{cor}\label{C:trival_indep}
Given pairwise distinct nonzero solutions $a_1,\ldots,a_n$ of the equation
$\partial(X)=X^{p^k+1}$ for $k\geq 1$, the elements $a_1,\ldots,a_n$ are independent in the theory $\DCF_p$; that is, \[  a_i\inddcf a_1,\ldots, a_{i-1} \ \text{ for all } 2\leqslant i\leqslant n.\]
\end{cor}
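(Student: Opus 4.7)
The strategy is to verify directly the two conditions in Definition \ref{D:indstar} and then apply Theorem \ref{T:main} to convert the resulting $\ind^*$-independence into forking independence. Fix $2 \leqslant i \leqslant n$ and set $K = \F_p(a_i)$ and $L = \F_p(a_1,\ldots,a_{i-1})$, with common base $\F_p = \dcl(\emptyset)$.

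The first step is to match the fields at hand with the relevant definable closures. By Proposition \ref{P:trivial_new} applied to any subcollection of $\{a_1,\ldots,a_n\}$, each of the fields $\F_p(a_i)$, $\F_p(a_1,\ldots,a_{i-1})$, and $\F_p(a_1,\ldots,a_i)$ is differentially perfect; hence, by Corollary \ref{C:diffperf}(b), $\dcl(a_i) = K$ and $\dcl(a_1,\ldots,a_{i-1}) = L$, while the compositum relevant for Definition \ref{D:indstar} is $KL = \F_p(a_1,\ldots,a_i)$.

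I next verify the two clauses of $\ind^*$ over $\F_p$. Algebraic independence of $a_1,\ldots,a_n$ over $\F_p$, again from Proposition \ref{P:trivial_new}, yields $K \indacf_{\F_p} L$. Differential perfectness of $KL$ means $\CC_{KL}^{1/p} = KL$, so $KL$ is trivially differentially trap (witnessed by the empty $p$-basis); this is precisely the reduction recorded in Remark \ref{R:remark_indstar}(c).

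Combining these, we obtain $K \ind^*_{\F_p} L$, and Theorem \ref{T:main} identifies $\ind^*$ with forking independence, yielding $a_i \inddcf_{\F_p} a_1,\ldots,a_{i-1}$ as claimed. No real obstacle remains: all the algebraic content (algebraic independence and differential perfectness of $\F_p(a_1,\ldots,a_n)$) was handled in Proposition \ref{P:trivial_new}, and the corollary is essentially a translation of that proposition into the language of the stability-theoretic independence $\inddcf$.
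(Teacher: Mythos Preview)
Your proof is correct and follows exactly the same route as the paper's one-line argument, which simply invokes Proposition~\ref{P:trivial_new}, Theorem~\ref{T:main}, and Remark~\ref{R:remark_indstar}(c). You have merely unpacked those references: identifying the relevant fields with the definable closures, verifying algebraic independence and differential perfectness of $KL$ from Proposition~\ref{P:trivial_new}, and then reading off $\ind^*$-independence and hence $\inddcf$-independence.
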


To analyze the induced structure on the definable set $\{x\in \UU: \partial(x)=x^{p^k +1}\}$, we need the following fact which may be folklore.
\begin{prop}\label{strdis}
Suppose that $V$ is a set definable in a model of a stable theory which isolates a complete stationary type and such that forking on $V$ coincides with equality. Then, $V$ is \emph{strictly disintegrated}. Namely, $V$ has the induced structure of a pure set. In particular, $V$ is strongly minimal.
\end{prop}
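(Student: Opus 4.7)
The plan is to show that every subset of $V^n$ definable with parameters in the ambient theory is a Boolean combination of diagonals $\{x_i = x_j\}$ and fibers $\{x_i = d\}$ for finitely many $d \in V$; this gives the pure-set induced structure, and strong minimality follows as the $n = 1$ case.

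Writing $p$ for the complete stationary type isolated by $V$, I would first show by induction on $n$ that any pairwise distinct $(a_1,\ldots,a_n) \in V^n$ has a type over $\emptyset$ independent of the choice of tuple. Given two such tuples $\bar a$ and $\bar b$, the inductive hypothesis supplies an automorphism matching their initial segments, so we may assume $a_i = b_i$ for $i < n$. The trivial-forking hypothesis ensures that both $a_n$ and $b_n$ are forking independent from $(a_1,\ldots,a_{n-1})$ over $\emptyset$, and stationarity of $p$ identifies $\tp(a_n / a_1,\ldots,a_{n-1})$ with $\tp(b_n / a_1,\ldots,a_{n-1})$ as the unique non-forking extension of $p$, completing the induction.

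To upgrade to arbitrary parameters $C$, I would first observe that the hypothesis forces $U(p) = 1$: any forking extension of $p$ to $C$ must, by the hypothesis applied over $C$, concentrate on $\acl(C) \cap V$, hence be algebraic. Consequently $V$ is strongly minimal and $D := \acl(C) \cap V$ is finite. Repeating the inductive argument with $\emptyset$ replaced by $C$ and with the unique non-forking extension $p|_C$ (still stationary) in place of $p$ shows that any two tuples $\bar a, \bar b \in V^n$ with matching equality patterns over $D$ have the same type over $C$. This yields the desired Boolean description.

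The crucial subtlety is reading the hypothesis ``forking on $V$ coincides with equality'' uniformly in the base, so that it gives simultaneously $U(p) = 1$ and the persistence of trivial forking upon moving the base from $\emptyset$ to $C$. Once this is granted, the inductive type-counting goes through unchanged, the finiteness of $D$ drops out, and strict disintegration follows, with strong minimality as a byproduct.
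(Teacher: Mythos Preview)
Your first paragraph is essentially the paper's argument: pairwise distinct tuples in $V^n$ are Morley sequences in the stationary type $p$, hence share a common $\emptyset$-type. The paper then finishes in one line by invoking stable embeddedness of $V$ (automatic in any stable theory): every externally definable subset of $V^n$ is already definable with parameters from $V$, so the description of $\emptyset$-definable relations as Boolean combinations of diagonals upgrades for free to all definable relations, and strong minimality is the $n=1$ case.

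Your detour through $U(p)=1$ is where the gap lies. The hypothesis ``forking on $V$ coincides with equality'', as the paper uses it and as verified in the application (Corollary~\ref{C:trival_indep}), asserts only that pairwise distinct elements of $V$ are independent \emph{over the base of $p$}. It says nothing about forking over an arbitrary external set $C$, so your step ``any forking extension of $p$ to $C$ must, by the hypothesis applied over $C$, concentrate on $\acl(C)\cap V$'' is unjustified: the hypothesis, however read, concerns dependence among elements of $V$, not between an element of $V$ and an external $C$. Your acknowledged ``uniform in the base'' reading is strictly stronger than what is assumed or needed, and even under that reading the quoted implication does not follow. The conclusion $U(p)=1$ is of course true, but it is a \emph{consequence} of the pure-set structure obtained via stable embeddedness plus your first paragraph, not an independent input to it. The fix is simply to cite stable embeddedness after your first paragraph, exactly as the paper does; your second and third paragraphs can then be dropped.
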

\begin{proof}
Since (by stability) $V$ is stably embedded, it is enough to show that for any $a_1,b_1,\ldots,a_n,b_n\in V$, we have
$$|\{a_1,\ldots,a_n\}|=n=|\{b_1,\ldots,b_n\}|\quad \Longrightarrow\quad \tp(a_1,\ldots,a_n)=\tp(b_1,\ldots,b_n).$$
However, the cardinality assumption and the assumption on forking on $V$ implies that both $(a_1,\ldots,a_n)$ and $(b_1,\ldots,b_n)$ are Morley sequences in the stationary complete type isolated by $V$, so the result follows.
\end{proof}
\begin{theorem}\label{T:type_trivial}
The equation $\partial(T)=T^{p^k+1}$ for $k\geq 1$ defines a strongly minimal set which is strictly disintegrated; in particular, it is geometrically trivial.

Moreover, if $k<l$, then the set of solutions of $\partial(T)=T^{p^k+1}$ is orthogonal to the set of solutions of $\partial(T)=T^{p^l+1}$.
\end{theorem}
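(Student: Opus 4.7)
The proof naturally splits into two parts. For the first claim, I plan to apply Proposition \ref{strdis} directly to the definable set $D_k := \{x \in \UU \setminus \{0\} : \partial(x) = x^{p^k+1}\}$. Both hypotheses of that proposition are already established in the excerpt: Remark \ref{R:isol}(a) shows that the defining formula isolates a complete stationary type over $\F_p$, while Corollary \ref{C:trival_indep} shows that any two distinct non-zero solutions are $\inddcf$-independent over $\F_p$, i.e.\ forking on $D_k$ coincides with equality. Proposition \ref{strdis} then yields strict disintegration of $D_k$, whence strong minimality and geometric triviality.

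For the orthogonality claim with $k<l$, the key observation is that Proposition \ref{P:trivial_new} is stated allowing distinct exponents $k_i \geq 1$. Applied with $n=2$ to a pair of distinct non-zero solutions $a\in D_k$ and $b\in D_l$, it yields that $\{a,b\}$ is algebraically independent over $\F_p$ and that the field $\F_p(a,b)$ is differentially perfect. By Theorem \ref{T:main} together with Remark \ref{R:remark_indstar}(b), this gives $a\inddcf_{\F_p} b$, which is exactly weak orthogonality of the generic types $p_k$ and $p_l$ over $\F_p$.

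To upgrade this to full orthogonality, I would argue by contradiction. Suppose there exists $B\supseteq \F_p$ and generic $a\in D_k$, $b\in D_l$ over $B$ with $b\in \acl(Ba)\setminus \acl(B)$. Since $D_l$ is stably embedded (being strongly minimal in a stable theory) and strictly disintegrated by the first part of the theorem, any element of $D_l$ algebraic over $Ba$ reduces to one algebraic over $E\cup\{a\}$, where $E := B\cap(D_k\cup D_l)$. Now Proposition \ref{P:trivial_new} applied to the set $E\cup\{a,b\}$ (all distinct non-zero Bernoulli solutions) shows this set is $\inddcf$-independent over $\F_p$; in particular $b\notin \acl(E\cup\{a\})$, a contradiction. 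Hence the generic types of $D_k$ and $D_l$ are orthogonal.

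The hard part will be the last step: rigorously justifying the reduction, via stable embeddedness and strict disintegration, of the parameters controlling $\acl(Ba)\cap D_l$ from $B$ down to $B\cap(D_k\cup D_l)$. A more computational alternative would be to redo the strategy of Proposition \ref{P:trivial_new} directly over an arbitrary base $B$, using Theorem \ref{T:main} to verify the algebraic-independence and differentially-trap clauses that together define $\ind^{*}$; this should be feasible because derivatives of Bernoulli solutions are polynomial in the solution itself, which tightly controls the differential field compositum.
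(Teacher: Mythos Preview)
Your first paragraph matches the paper's proof of strict disintegration exactly: Proposition~\ref{strdis} applied via Remark~\ref{R:isol}(a) and Corollary~\ref{C:trival_indep}.

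For orthogonality, the paper takes a much shorter route that you almost set up but then abandon. You correctly observe that Proposition~\ref{P:trivial_new} allows \emph{different} exponents $k_i$, but you only apply it with $n=2$. The paper instead invokes the criterion \cite[Chapter~2, Lemma~4.3.1(iii)]{PillayBook}: two stationary types $p,q$ over the same base are orthogonal iff $p^{\otimes n}$ and $q^{\otimes m}$ are weakly orthogonal for all $n,m$. Given $n$ distinct elements of $D_k$ and $m$ distinct elements of $D_l$ (automatically disjoint since $k\neq l$), Proposition~\ref{P:trivial_new} applied to all $n+m$ of them shows they are algebraically independent with differentially perfect compositum, hence $\inddcf$-independent by Theorem~\ref{T:main} and Remark~\ref{R:remark_indstar}. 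That is the whole argument.

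Your proposed upgrade from weak to full orthogonality has a genuine gap at precisely the step you flag as ``the hard part.'' Stable embeddedness and strict disintegration of $D_l$ tell you that $Ba$-definable subsets of $D_l$ are definable over canonical parameters in $D_l^{\mathrm{eq}}\cap\acl^{\mathrm{eq}}(Ba)$; they do \emph{not} let you replace $B$ by $B\cap(D_k\cup D_l)$, because $a\notin D_l$ and you have no control yet over the induced structure on $D_k\cup D_l$ jointly---that is essentially the orthogonality statement you are trying to prove. Your alternative of redoing Proposition~\ref{P:trivial_new} over an arbitrary base would work in principle but is unnecessary: the Pillay criterion plus the full strength of Proposition~\ref{P:trivial_new} (arbitrary $n$, mixed exponents) already delivers orthogonality in one line.
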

\begin{proof}
The strict disintegration follows immediately from Proposition \ref{strdis} together with Corollary \ref{C:trival_indep} and Remark \ref{R:isol} (a).

By \cite[Chapter 2, Lemma 4.3.1(iii)]{PillayBook}, two stationary types $p$ and $q$ defined over the same base set are orthogonal if and only if all for all $n$ and $m$, the types $p^{\otimes n}$ and $q^{\otimes m}$ are weakly orthogonal, where $p^{\otimes n}$ is the stationary type given by a Morley sequence of $p$ of length $n$. Hence, the moreover part follows from Proposition \ref{P:trivial_new}.
\end{proof}

\begin{remark}
We conclude by making a few notes on how our examples of geometrically trivial strongly minimal sets compared to the known examples in the cases of the theories DCF$_0$ and SCF$_{p,\infty}$.
\begin{enumerate}[(a)]
\item One may compare the transcendence statement from Proposition \ref{P:trivial_new} with the corresponding results in the characteristic 0 case about strongly minimal and geometrically trivial definable sets (see e.g. \cite{cfn1, remi, np14}) which also give ``Ax-Schanuel like'' consequences. We would also like to point out that our proof is quite different than the corresponding proofs of strong minimality of differential equations in characteristic 0: we show $p$-independence of distinct solutions first,  which then gives their algebraic independence and strict disintegration of the set of solutions (directly implying strong minimality and triviality) almost immediately.

\item It is rather straightforward now to see what happens in the presence of parameters. By Theorem \ref{T:type_trivial} and the descripion of the model theoretic algebraic closure in DCF$_p$ (see Corollary \ref{C:diffperf} (b)), a solution $a$ of $\partial(T)=T^{p^k+1}$ is generic over a differentially perfect differential subfield of $\UU$ if and only if it is transcendental over this subfield.

\item An infinite set which is $\SCF_{p,\infty}$-definable cannot have finite U-rank or even be superstable: see part (4) of the remark after Proposition 2.11 in \cite{fD98} which is stated in the case of finite imperfection degree, but the argument works in the case of $\SCF_{p,\infty}$ as well. Therefore, our strictly disintegrated example is ``very different'' from $\SCF_{p,\infty}$-definable sets.

    However, it can be easily shown (using Leibniz's method) that if $n$ is not congruent to $1$ modulo $p$, then the set
  \[\{x\in \UU\ |\ \partial(x)=x^n\}\]
  is $\SCF_{p,\infty}$-definable over one additional parameter.

\item We could not find in the literature any examples of trivial types of U-rank one in the theory $\SCF_{p,\infty}$, so, in this sense, our strictly disintegrated example is also new.
\end{enumerate}
\end{remark}

\section*{Appendix: A general criterion for stability}\label{secgenst}

\numberwithin{equation}{section}
 \setcounter{section}{5}

We assume we have two countable languages $\LL\subseteq \LL'$, a complete $\LL$-theory $T$, and a complete $\LL'$-theory $T'$ such that $T\subseteq T'$. Let us fix a monster model $\UU\models T'$, so it is also a monster model of $T$. All $\LL$-structures and $\LL'$-structures considered are assumed to be substructures of $\UU$. For any $A\subset \UU$, let $\langle A\rangle_{\LL'}$ denote the $\LL'$-substructure of $\UU$ generated by $A$.

The proof of the following result comes from the proof of \cite[Theorem 2.4]{pK05} and is inspired by Shelah's proof of
\cite[Theorem 9]{sS73}.

\begin{theorem}\label{genstab}
The conditions listed below imply that the theory $T'$ is stable.
\begin{enumerate}
  \item The theory $T$ is stable (and let us denote the corresponding forking-independence relation by $\ind$).

  \item The theory $T'$ has quantifier elimination.

  \item $($\emph{the $\ind$-coproduct condition}$)$  Suppose that
  \begin{itemize}
    \item $M,M',N\models T'$;

    \item $M\cap N=M'\cap N=:N_0$;

    \item $M\ind_{N_0}N$ and $M'\ind_{N_0}N$;

    \item there is an $\LL'$-isomorphism  over $N_0$
    $$f:M\xrightarrow{\cong} M'.$$

  \end{itemize}
 Then, $f$ extends to an $\LL'$-isomorphism over $N$
 $$\tilde{f}:\langle M\cup N\rangle_{\LL'}\xrightarrow{\cong} \langle M'\cup N\rangle_{\LL'}.$$
 \end{enumerate}
\end{theorem}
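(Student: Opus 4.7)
The plan is to prove the stability of $T'$ by a Shelah-style type-counting argument along the lines of \cite[Theorem~9]{sS73} and \cite[Theorem~2.4]{pK05}. Fix an infinite cardinal $\lambda$ such that $T$ is $\lambda$-stable and $\lambda^{\aleph_0} = \lambda$; since $T$ is stable and $\LL'$ is countable, such $\lambda$ exist cofinally. It suffices to show $|S_1^{T'}(N)| \leq \lambda$ for every $N \models T'$ of cardinality $\lambda$.

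To each $p \in S_1^{T'}(N)$ realized by some $a \in \UU$, I would attach a datum $\tau(p) = (N_0, [(M, a)/N_0])$, where $N_0 \preceq_{\LL'} N$ is a small $\LL'$-elementary substructure of size at most $|T'|$ and $M \preceq_{\LL'} \UU$ is a small $\LL'$-elementary substructure of size at most $|T'|$ containing $a$, with $M \cap N = N_0$ and $M \ind_{N_0} N$ in $T$; here $[(M,a)/N_0]$ denotes the $\LL'$-isomorphism type of the pointed structure $(M, a)$ over $N_0$. The pair $(N_0, M)$ is produced by an interleaved construction: apply downward L\"owenheim--Skolem inside $\UU$ and $N$ to obtain an initial countable pair; alternate closure under $\LL'$-function symbols with applications of local character of $\ind$ in $T$ (using quantifier elimination in $T'$ to preserve elementarity); then invoke extension for $\ind$ in $T$ together with forking calculus (noting that $M \ind_{N_0} N$ forces $M \cap N \subseteq \acl^T(N_0)$) followed by a final enlargement of $N_0$ to arrange $M \cap N = N_0$.

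The heart of the argument is the injectivity of $\tau$. Suppose $q \in S_1^{T'}(N)$ admits the same datum $\tau(p)$, witnessed by a realization $a' \in M'$ with $M' \cap N = N_0$, $M' \ind_{N_0} N$ in $T$, and an $\LL'$-isomorphism $f : M \to M'$ over $N_0$ with $f(a) = a'$. Then hypothesis (3) extends $f$ to an $\LL'$-isomorphism $\tilde f : \langle M \cup N\rangle_{\LL'} \to \langle M' \cup N\rangle_{\LL'}$ fixing $N$ pointwise, with $\tilde f(a) = a'$. Quantifier elimination in $T'$ (hypothesis (2)) then forces $\tp^{T'}(a/N) = \tp^{T'}(a'/N)$, so $p = q$.

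Counting the data: there are at most $\lambda^{|T'|} = \lambda$ choices for $N_0$ and at most $2^{|T'|} \leq \lambda$ $\LL'$-isomorphism types of small pointed structures over a given $N_0$, giving $|S_1^{T'}(N)| \leq \lambda$ and hence $\lambda$-stability of $T'$. The main obstacle I foresee is the back-and-forth construction of $\tau(p)$: arranging simultaneously $M \ind_{N_0} N$ in $T$, $M \cap N = N_0$, and that $M$ and $N_0$ are $\LL'$-elementary substructures requires a delicate interleaving of local character, extension, and forking calculus in $T$ with closure under $\LL'$-operations, exploiting quantifier elimination in $T'$ throughout.
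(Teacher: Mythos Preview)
Your approach is essentially the paper's: to each $a \in \UU$ attach a small $\LL'$-structure $M_a \ni a$ with $M_a \ind_{M_a \cap N} N$ in $T$, then use hypothesis~(3) together with quantifier elimination to show that the $\LL'$-isomorphism type of $(M_a,a)$ over $M_a \cap N$ determines $\tp^{T'}(a/N)$, and count. The injectivity step and the cardinal arithmetic are identical to the paper's.

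The paper's construction of $M_a$ is simpler than what you sketch. It uses only iterated local character of $\ind$ in $T$, interleaved with closing under $\LL'$-operations (exactly the chain argument from the proof of local character in Theorem~\ref{T:main}), and produces a countable $\LL'$-\emph{substructure} rather than an elementary one. Your invocation of extension for $\ind$ is unnecessary and slightly misplaced: extension would move the realization by an automorphism over the base, which conflicts with keeping a fixed $a \in M$. The equality $M \cap N = N_0$ falls out directly from the interleaving (at each stage absorb $M_n \cap N$ into the next approximation to $N_0$), not from a separate forking-calculus step applied afterwards. Your insistence that $M$ and $N_0$ be genuine models of $T'$ matches hypothesis~(3) as literally stated and can be arranged by interleaving downward L\"owenheim--Skolem steps; the paper, by contrast, applies the coproduct condition with mere $\LL'$-substructures, implicitly relaxing~(3).
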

\begin{proof}
Let us fix $N\models T'$.
\begin{claim}\label{Cl:asinlocal}
For any $a\in \UU$, there is a countable $\LL'$-substructure $M_a\subset \UU$ such that $a\in M_a$ and
$$M_a\ind_{M_a\cap N}N.$$
\end{claim}
\begin{claimproof}
The proof is exactly the same as the proof of the construction of $K'$ and $\ell$ in the proof of Claim \ref{Cl:locchar} (local character of $\ind^*$) from the proof of Theorem~\ref{T:main}.
\end{claimproof}
For any $a\in \UU$, we fix a countable $\LL'$-structure $M_a$ as in the statement of Claim~\ref{Cl:asinlocal}. We define the following equivalence relation on $\UU$. For any $a,b\in \UU$, we set $a\sim b$ if and only if
$$M_a\cap N=M_b\cap N=:N_{ab},$$
and there is an $\LL'$ isomorphism
$$f:M_a\xrightarrow{\cong}N_a$$
 over $N_{ab}$ such that $f(a)=b$.
\begin{claim}\label{Cl:coarser}
For any $a,b\in \UU$, if $a\sim b$ then $\tp_{\LL'}^{\UU}(a/N)=\tp_{\LL'}^{\UU}(b/N)$.\end{claim}
\begin{claimproof}
Assume that $a\sim b$ and let $N_{ab}$ and $f$ be as above. We are now exactly in the situation from the ``$\ind$-coproduct condition'' assumption, so $f$ extends to an $\LL'$-isomorphism
 $$\tilde{f}:\langle M_a\cup N\rangle_{\LL'}\xrightarrow{\cong} \langle M_b\cup N\rangle_{\LL'}$$
 over $N$. Since $\tilde{f}(a)=b$, we obtain
$$\tp_{\LL'}^{M_aN}(a/N)=\tp_{\LL'}^{M_bN}(b/N).$$
Since $T'$ has quantifier elimination, we get $\tp_{\LL'}^{\UU}(a/N)=\tp_{\LL'}^{\UU}(b/N)$.
\end{claimproof}
It is clear that the relation $\sim$ has at most $|N|^{\aleph_0}$ equivalence classes. By Claim~2, the relation $\sim$ is coarser than the equality of $\LL'$-types calculated in $\UU$, so $T'$ is stable.
\end{proof}

\begin{remark} \label{R:criterion_consequences}
\begin{enumerate}
\item One can easily check that Theorem \ref{genstab} directly implies the stability of DCF$_{p,m}$ (taking $T=$ SCF$_{p,\infty}$).
  \item We expect that conditions $(1)$--$(3)$ from the statement of Theorem \ref{genstab} should be easy to check in suitable theories of fields with operators,
  where $T=\ACF$ or $T=\SCF_{p,\infty}$, since linear disjointness implies that the compositum comes from the tensor product (being the appropriate coproduct in this context).

  \item In the unstable theory ACFA, one cannot put ``quantifier elimination and the $\ind$-coproduct condition'' together.

\item As Theorem \ref{T:main} indicates, it is hard to expect that one can improve the conclusion of Theorem \ref{genstab} to include a \emph{description} of the forking-independence in the theory $T'$. It is still possible that one gets a description over models similar to the one in Fact \ref{F:Shelah}, but we do not know how to prove (or disprove) it at the moment.

 \item Our proof of Theorem \ref{genstab} is inspired by Shelah's proof of stability of DCF$_p$ (see \cite[Theorem 9]{sS73}). However, Shelah's proof is more difficult, since he uses only stability of the theory ACF$_p$ rather than stability of the theory SCF$_{p,\infty}$. That is why Shelah had to prove and use an additional result from differential algebra (cf. \cite[Lemma 7.6]{MPZ20}) to conclude his proof. Historically, the stability of SCF$_{p,\infty}$ was an immediate consequence of stability of DCF$_p$ as observed by Macintyre. A separate proof of stability of SCF$_{p,\infty}$ (still informed by the proof of \cite[Theorem 9]{sS73}) was found later by Macintyre, Shelah, and Wood (see \cite[Theorem 3]{cW79}).

\end{enumerate}
 \end{remark}


\begin{thebibliography}{99}

\bibitem{cB25}
C. Bartnick.
\newblock Stationarity and elimination of imaginaries in stable and simple theories.
\newblock Fundamenta Mathematicae, 270:277--299, 2025.

\bibitem{BHKK}
O. Beyarslan, D. Hoffmann, M. Kamensky, and P. Kowalski.
\newblock Model theory of fields with free operators in positive
  characteristic.
\newblock Transactions AMS, 372(8):5991--6016, 2019.


\bibitem{bernoulli}
J. Bernoulli.
\newblock Explicationes, {A}nnotationes \& {A}dditiones ad ea, quae in {A}ctis
  sup. de {C}urva {E}lastica, {I}sochrona {P}aracentrica, \& {V}elaria, hinc
  inde memorata, \& paratim controversa legundur; ubi de {L}inea mediarum
  directionum, alliisque novis.
\newblock Acta Eruditorum, pp.59--67 and 537--557, 1695.

\bibitem{BMPW15}
T. Blossier, A. Martin-Pizarro, and F. Wagner.
\newblock G\'eom\'etries relatives.
\newblock J. EMS, 17(2):229--258, 2015.


\bibitem{cfn1}
G. Casale, J. Freitag, and J. Nagloo.
\newblock {A}x-{L}indemann-{W}eierstrass with derivatives and the genus 0
  {F}uchsian groups.
\newblock Ann. of Math., 192(3):721--765, 2020.

\bibitem{zC99}
Z. Chatzidakis.
\newblock Simplicity and independence for pseudo-algebraically closed fields.
\newblock In Models and computability ({L}eeds, 1997), LMS Lecture Note Series {259}, 41--61, 1999.

\bibitem{zC2001}
Z. Chatzidakis.
\newblock Generic automorphisms of separably closed fields.
\newblock Illinois Journal of Mathematics. 45(3):693--733, 2001.

\bibitem{fD98}
F. Delon.
\newblock Separably closed fields.
\newblock In Model Theory and Algebraic Geometry, Lecture Notes in Mathematics 1696, pp.143--176, 1998.

\bibitem{yE68}
Y. Ershov.
\newblock Fields with a solvable theory.
\newblock Sov. Math., Dokl. {8},  575--576, 1967.

\bibitem{goko1}
J. Gogolok and P. Kowalski.
\newblock Operators coming from ring schemes.
\newblock {J. Lond. Math. Soc. (2)}, 106(3):1725--1758, 2022.


\bibitem{HH84}
V. Harnik and L. Harrington.
\newblock Fundamentals of forking.
\newblock Ann.  Pure Appl. Logic {26}, 245--286, 1984.

\bibitem{HK4}
D. Hoffmann and P. Kowalski.
\newblock {P}{A}{C} structures as invariants of finite group actions.
\newblock J. Symb. Logic (3) 90, 1340--1375, 2025.



 \bibitem{ILS23}
 K. Ino and O. Le\'on S\'anchez.
 \newblock Separably differentially closed fields.
 Illinois J. Math, 69(1): 43--79, 2025.


    \bibitem{nJ12}
    N. Jacobson.
    \newblock Lectures in Abstract Algebra: III. Theory of Fields and Galois Theory.
    \newblock Graduate Texts Math. {32}, 2012. Springer, New York, NY. 

\bibitem{remi}
R. Jaoui.
\newblock Generic planar algebraic vector fields are strongly minimal and disintegrated.
\newblock {Algebra and Number Theory}, 15(10):2449--2483, 2021.

    \bibitem{KP97}
    B. Kim and A. Pillay.
    \newblock Simple theories.
    \newblock Ann. Pure Appl. Logic {88}, 149--164, 1997.

\bibitem{eK73}
E. R. Kolchin.
\newblock Differential algebra and algebraic groups.
\newblock Pure Appl. Math. {114}, 1985. Academic Press, New York, NY. 

\bibitem{pK05}
P. Kowalski.
\newblock Derivations of the {F}robenius map.
\newblock J. Symb. Logic {70}, 99--110, 2005.


    \bibitem{KP02}
    P. Kowalski and A. Pillay.
    \newblock A note on groups definable in difference fields.
    \newblock Proc. AMS, {130}, 205--212, 2002.


\bibitem{sL02}
S. Lang.
\newblock Algebra.
\newblock Graduate Texts in Mathematics 211, 3rd revised ed., 2002.  Springer, New York, NY. 


    \bibitem{LSM24}
    O. Le\'on S\'anchez and S. Mohamed.
    \newblock Neostability transfers in derivation-like theories. \newblock Model Theory, 4(2):177--201, {2025}.

    \bibitem{MP24}
    A. Martin-Pizarro.
    \newblock Non-forking independence in stable theories.
    \newblock Preprint 2024, \url{https://arxiv.org/abs/2410.09887}

    \bibitem{MPZ20}
    A. Martin-Pizarro and Martin Ziegler.
    \newblock Equational theories of fields.
    \newblock J. Symbolic Logic {85}, 828--851, 2020.

    \bibitem{mMcW95}
    M. Messmer and C. Wood.
    \newblock Separably closed fields with higher derivations.
    \newblock J. Symb. Logic {60}, 898--910, 1995.


    \bibitem{MS14}
    R. Moosa and T. Scanlon.
    \newblock Model theory of fields with free operators in characteristic zero.
    \newblock Journal of Math. Logic, 14(2), 2014.



\bibitem{np14}
J. Nagloo and A. Pillay.
\newblock On the algebraic independence of generic {P}ainlev\'{e} transcendents.
\newblock Compositio Mathematica, 150(4):668--678, 2014.


   \bibitem{dP14}
   D. Pierce.
   \newblock Fields with several commuting derivations.
   \newblock J. Symb. Logic {79}, 1--19, 2014.

   \bibitem{PillayBook}
   A. Pillay.
 \newblock  Geometric stability theory.
 \newblock Oxford Logic Guides (1996).
 \newblock The Clarendon Press Oxford University Press.





\bibitem{sS73}
S. Shelah.
\newblock Differentially closed fields.
\newblock Israel J. Math. {16}, 314--328, 1973.

\bibitem{gS86}
G. Srour.
\newblock The independence relation in separably closed fields.
\newblock J. Symb. Logic {51}, 715--725, 1986.

   \bibitem{cW73}
   C. Wood.
   \newblock The model theory of differential fields of characteristic $p\not=0$.
   \newblock Proc. Amer. Math. Soc. {40}, 577--584, 1973.

    \bibitem{cW74}
    C. Wood.
    \newblock Prime model extensions for differential fields of characteristic $p\not=0$.
    \newblock J. Symb. Logic {39}, 469--477, 1974.

    \bibitem{cW76}
    C. Wood.
    \newblock The model theory of differential fields revisited.
    \newblock Israel J. Math. {25}, no.~3-4, 331--352, 1976.

\bibitem{cW79}
C. Wood.
\newblock Notes on the stability of separably closed fields.
\newblock J. Symb. Logic {44}, 412--416, 1979.

\end{thebibliography}
\end{document}